\theoremstyle{plain} {
  \newtheorem{thm}{Theorem}[section]
  \newtheorem{cor}[thm]{Corollary}
  \newtheorem{lem}[thm]{Lemma}
  \newtheorem{prop}[thm]{Proposition}

}
\theoremstyle{definition}
{
  \newtheorem{defn}[thm]{Definition}
  
  \newtheorem{example}[thm]{Example}
  
}
\theoremstyle{remark}
{
  
}
\renewcommand{\subsubsection}{\sssection\rm}
\newcommand{\pt}{pt}
\newcommand{\rk}{\mathrm{rk}}
\DeclareMathOperator{\Th}{Th}
\newcommand{\MGL}{\mathbf {MGL}}
\newcommand{\SmOp}{\mathcal Sm\mathcal Op}
\newcommand{\Sm}{\mathcal Sm}
\newcommand{\Aff}{\mathbf {A}}
\newcommand{\ZZ}{\mathbb{Z}}
\newcommand{\colim}{\operatorname{colim}}
\newcommand \xra {\xrightarrow }
\newcommand \lra {\longrightarrow }
\newcommand \hra {\hookrightarrow }
\newcommand \xla {\xleftarrow }
\newcommand{\BO}{\mathbf{BO}}
\newcommand{\MSp}{\mathbf{MSp}}
\newcommand{\MSL}{\mathbf{MSL}}
\newcommand{\KO}{\mathbf{KO}}
\newcommand{\KSp}{\mathbf{KSp}}
\newcommand{\HH}{\mathsf{H}}
\newcommand{\GG}{\mathbb{G}}
\newcommand{\OO}{\mathcal{O}}
\newcommand{\thy}[1]{\mathrm{#1}}
\DeclareMathOperator{\thom}{\mathnormal{th}}
\newcommand{\shf}{\mathcal}
\DeclareMathOperator{\Pf}{Pf}
\newcommand{\into}{\hra}
\newcommand{\hh}{\mathsf{h}}
\newcommand{\signtrans}{\text{\textup{\textsf{sign.trans}}}}
\newcommand{\unsigntrans}{\text{\textup{\textsf{unsign.trans}}}}
\newcommand{\parens}[1]{\textup{(}#1\textup{)}}
\newcommand{\homog}{\text{\textit{hom}}}
\begin{document}

\title{On the relation of symplectic algebraic cobordism to hermitian $K$-theory}

\author{I.~Panin}
\address{Steklov Institute of Mathematics at St.~Petersburg, Russia}
\author{C.~Walter}
 \address{Laboratoire J.-A.\ Dieudonn\'e (UMR 6621 du CNRS)\\
 D\'epartement de math\'ematiques\\
 Universit\'e de Nice -- Sophia Antipolis\\ 06108 Nice Cedex 02\\
 France}

\thanks{The first author gratefully acknowledge excellent working conditions and support provided by
Laboratoire J.-A. Dieudonn\'{e}, UMR 6621 du CNRS, Universite de Nice - Sophia-Antipolis,
and by the RCN Frontier Research Group Project no. 250399 Motivic Hopf equations at University of Oslo.
}

\begin{abstract}
We reconstruct hermitian $K$-theory via algebraic symplectic cobordism.
In the motivic stable homotopy category $SH(S)$
there is a unique morphism
$\varphi\colon \MSp \to \BO$
of commutative ring $T$-spectra
which sends the Thom class
$th^{\MSp}$
to the Thom class
$th^{\BO}$.
Using $\varphi$ we construct an isomorphism of bigraded ring
cohomology theories on the category
$\SmOp/S$
$$
\bar\varphi\colon \MSp^{\ast,\ast}(X,U)
\otimes_{\MSp^{4\ast, 2\ast}(\pt)} \BO^{4\ast, 2\ast}(\pt)
\cong \BO^{\ast,\ast}(X,U).
$$
The result is an algebraic version of the theorem of Conner and Floyd
reconstructing real $K$-theory using symplectic cobordism.
Rewriting the bigrading as $\MSp^{p,q} = \MSp^{[q]}_{2q-p}$,
we have an isomorphism
\[
\bar\varphi\colon \MSp^{[*]}_{*}(X,U)
\otimes_{\MSp^{[2*]}_{0}(\pt)} KO^{[2*]}_0(\pt)
\cong KO^{[*]}_{*}(X,U),
\]
where the $KO^{[n]}_{i}(X,U)$ are Schlichting's hermitian $K$-theory groups.
\end{abstract}

\maketitle

\section{A motivic version of a theorem by Conner and Floyd}
\label{Introduction}
Our main result relates symplectic
algebraic cobordism
to
hermitian
$K$-theory.
It is an algebraic version of the theorem of Conner and Floyd \cite[Theorem 10.2]{Conner:1966uk}
reconstructing real $K$-theory using symplectic cobordism.  The algebraic version of the
reconstruction of complex $K$-theory using unitary cobordism was done in \cite{Panin:2009fp}.

In \cite{Panin:2010aa} the current authors constructed a commutative ring $T$-spectrum
$\BO$
representing hermitian $K$-theory in the stable homotopy
category $SH(S)$ for any regular noetherian separated base scheme $S$ of finite Krull dimension without
residue fields of characteristic $2$.  (These restrictions allowed us to use particularly strong
results of Marco Schlichting \cite{Schlichting:2006aa}.  We leave it to the expert(s) in negative
hermitian $K$-theory to weaken them.)  It has a standard family of
Thom classes for special linear vector bundles and hence for symplectic bundles.
The symplectic Thom classes can all be derived from a single class
$\thom^{\BO} \in \BO^{4,2}(\Th \shf U_{HP^{\infty}}) = \BO^{4,2}(\MSp_{2})$, the symplectic
Thom orientation.

In \cite{Panin:2010ab} we constructed the commutative ring $T$-spectrum $\MSp$
of algebraic symplectic cobordism.  It is a commutative monoid in the model category
of symmetric $T^{\wedge 2}$-spectra, just as $\MSL$ and Voevodsky's $\MGL$ are commutative
monoids in the model category of symmetric $T$-spectra.  The canonical map
$\Sigma_{T}^{\infty}\MSp_{2}(-2) \to \MSp$ gives the symplectic Thom orientation
$\thom^{\MSp} \in \MSp^{4,2}(\MSp_{2})$.  It is the universal symplectically oriented
commutative ring $T$-spectrum.

Therefore there is a unique morphism $\varphi\colon \MSp \to \BO$ of commutative monoids
in $SH(S)$ with $\varphi(\thom^{\MSp}) = \thom^{\BO}$.  Our main result is the following theorem.
Our notation is that for a motivic space $Y$
and a bigraded cohomology theory
we write
$A^{*,*}(Y) = \bigoplus_{p,q \in \ZZ} A^{p,q}(Y)$ and
$A^{4*,2*}(Y) = \bigoplus_{i \in \ZZ} A^{4i,2i}(Y)$.
A motivic space $Y$ is \emph{small} if
$Hom_{SH(S)}(\Sigma^\infty_{T} Y,{-})$
commutes with arbitrary coproducts.

\begin{thm}
\label{T:main.CF}
Let $S$ be a regular noetherian separated scheme of finite Krull dimension with
$\frac 12 \in \Gamma(S,\OO_{S})$.
For all small pointed motivic spaces $Y$ over $S$ the map
\[
\bar\varphi\colon \MSp^{\ast,\ast}(Y)
\otimes_{\MSp^{4\ast, 2\ast}(\pt)} \BO^{4\ast, 2\ast}(\pt)
\to \BO^{\ast,\ast}(Y).
\]
induced by $\varphi$ is an isomorphism.
\end{thm}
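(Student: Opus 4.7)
The plan is to treat the source $A^{*,*}(Y) := \MSp^{*,*}(Y) \otimes_{\MSp^{4*,2*}(\pt)} \BO^{4*,2*}(\pt)$ and the target $B^{*,*}(Y) := \BO^{*,*}(Y)$ as bigraded cohomology theories on small pointed motivic spaces, with $\bar\varphi$ a natural transformation of theories, and then prove that $\bar\varphi$ is an isomorphism by reducing to a sufficiently large family of test spaces and propagating via standard closure properties.

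First I would verify that $A^{*,*}$ really is a cohomology theory, which amounts to an exactness statement for the base-change functor ${-} \otimes_{\MSp^{4*,2*}(\pt)} \BO^{4*,2*}(\pt)$ applied to $\MSp^{*,*}$-cohomology groups. One expects $\BO^{4*,2*}(\pt)$ to be flat, or at least Landweber-exact in the sense adapted to symplectic orientations, over $\MSp^{4*,2*}(\pt)$. This is where most of the technical work lies; in the best case one would identify explicit generators $b_i$ for the kernel of $\MSp^{4*,2*}(\pt) \to \BO^{4*,2*}(\pt)$ and show that, together with possibly an element to invert encoding the symplectic Bott periodicity of $\BO$, they form a regular sequence acting on $\MSp^{*,*}(Y)$ for every $Y$.

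Next comes the main geometric input: prove the isomorphism for $Y = \HP^n_+$ and for symplectic Thom spaces, in particular the spaces $\MSp_{2n}$. Since both $\MSp$ and $\BO$ are symplectically oriented, the quaternionic projective bundle theorem gives
\[
E^{*,*}(\HP(V)) \cong E^{*,*}(X)[p]/f_{V}(p)
\]
for $E \in \{\MSp, \BO\}$ and any symplectic bundle $V \to X$, where $f_{V}(p)$ is a monic polynomial in the Pontryagin classes of $V$. Because $\varphi$ sends $\thom^{\MSp}$ to $\thom^{\BO}$, it sends symplectic Pontryagin classes to symplectic Pontryagin classes, so $\bar\varphi$ intertwines the two projective bundle formulas on the nose. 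This yields the result for $\HP^n$ and, by iterating, for products of $\HP^{n_i}$; invoking the Thom isomorphism, which is preserved by $\varphi$, extends the isomorphism to all symplectic Thom spaces, in particular to $\MSp_{2n}$.

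Finally I would extend to $Y = \MSp$ realized as a hocolim of shifts of $\MSp_{2n}$, and then to arbitrary small pointed motivic spaces by cellular induction along cofiber sequences, invoking five-lemma arguments at each step. Smallness of $Y$ is precisely what is needed to ensure that both cohomology theories commute with the coproducts and filtered colimits that arise from a cell decomposition. The step I expect to be the hardest is the algebraic exactness property of step one: controlling the $\MSp^{4*,2*}(\pt)$-module structure of $\BO^{4*,2*}(\pt)$ well enough that base change is exact on $\MSp^{*,*}(Y)$ is the heart of every Conner--Floyd-type reconstruction and is typically the delicate point of the argument.
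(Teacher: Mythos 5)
Your proposal and the paper's proof diverge at the very first step, and the divergence is not cosmetic: the paper never establishes, and never needs, the Landweber-exactness statement that you correctly identify as ``the heart'' of your approach. You propose to show that $A^{*,*}(Y) := \MSp^{*,*}(Y) \otimes_{\MSp^{4*,2*}(\pt)} \BO^{4*,2*}(\pt)$ is a cohomology theory, which requires the base-change functor to be exact on the long exact sequences arising from cofiber sequences; without it, your final cellular-induction-plus-five-lemma step has nothing to stand on. You do not prove this exactness, and there is no indication in the paper that it is available (indeed, $\BO$ is not complex orientable, so the standard Landweber machinery for $\MU$ and $\MGL$ does not transfer, and the coefficient ring $\MSp^{4*,2*}(\pt)$ is not nearly as well understood as $\MU^{*}$). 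So as written, your argument has a genuine gap at its foundation.

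The paper's route is the original Conner--Floyd one, predating Landweber exactness, and it is instructive to see how it avoids the problem. Rather than arguing that $A^{*,*}$ is a cohomology theory, the authors construct an explicit natural \emph{section} $s_X$ of $\bar\varphi_X\colon A^{4,2}(X) \to \BO^{4,2}(X)$, using the geometric model $\ZZ \times HGr$ for the space representing $\BO^{4,2} = KO_0^{[2]}$, the identification of $\BO^{4,2}(X)$ with a Grothendieck--Witt group, and the observation that the universal class on $\ZZ\times HGr$ is precisely $p_1 + \tfrac12\rk\cdot \hh$ (Corollary~\ref{C:p1}); since $p_1$ and $\rk$ both exist in any symplectically oriented theory, one can write down the corresponding element of $A^{4,2}(\ZZ\times HGr)$ and feed it through Yoneda. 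Periodicity then yields sections $t_X$ in all degrees $(8i+4,4i+2)$ (Lemma~\ref{L:2}). Injectivity is obtained from the colimit formula $\thy{U}^{4d,2d}(X) = \colim_m \Hom_{H_\bullet(S)}(X\wedge T^{\wedge m},\thy U_{2d+m})$ valid for small $X$ (Lemma~\ref{L:3}), together with the cellularity statement for $\MSp_{2r}$ (Theorem~\ref{T:cellularity}, which does match the part of your step two concerning $\MSp_{2n}$). Finally, Lemma~\ref{L:4} propagates the isomorphism from degrees $(8i+4,4i+2)$ to all bidegrees by $\Sigma_{S^1}$ and $\Sigma_{\GG_m}$ suspensions. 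None of this requires $A^{*,*}$ to be a cohomology theory: one only uses that $\bar\varphi$ has a natural retraction in certain degrees, plus the smallness hypothesis. This is the key idea you are missing, and replacing your step one with it is not a local patch but a different proof.
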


This has as a consequence the result mentioned in the abstract.  For a pair $(X,U)$ consisting of a
smooth $S$-scheme of finite type $X$ and an open subscheme $U$, there is a
quotient pointed motivic space $X_{+}/U_{+}$.
We define $\MSp^{*,*}(X,U) = \MSp^{*,*}(X_{+}/U_{+})$ and $\BO^{*,*}(X,U) = \BO^{*,*} (X_{+}/U_{+})$.
There are natural isomorphisms $\BO^{p,q}(X,U) = KO_{2q-p}^{[q]}(X,U)$ with the hermitian
$K$-theory of $X$ with supports in $X-U$ as defined by Schlichting \cite{Schlichting:2010uq}.
The weight $q$ is the degree of the shift in the duality used for the symmetric bilinear forms
on the chain complexes of vector bundles.

For a field $k$ of characteristic not $2$ the ring $\BO^{4*,2*}(k)$ is not large.  For all $i$ one has
$\BO^{8i,4i}(k) \cong GW(k)$ and $\BO^{8i+4,4i+2}(k) \cong \ZZ$.  All members of $\BO^{0,0}(k)$
therefore come from composing endomorphisms in $SH(k)$ of the sphere $T$-spectrum
$\boldsymbol 1 = \Sigma_{T}^{\infty}\pt_{+}$ with the unit $e \colon \boldsymbol 1 \to \BO$ of
the monoid.  (See Morel \cite[Theorem 4.36]{Morel:2006aa} and Cazanave \cite{Cazanave:2010aa}
for calculations of the endomorphisms of the sphere $T$-spectrum.)  Consequently
$\varphi^{0,0} \colon \MSp^{0,0}(k) \to \BO^{0,0}(k)$ is surjective.  We do not know what happens
in other bidegrees.

This is the fourth in a series of papers about symplectically oriented motivic cohomology theories.
All depend on the quaternionic projective bundle theorem proven in the first paper \cite{Panin:2010fk}.


\section{Preliminaries}

Let $S$ be a Noetherian separated scheme of finite Krull dimension.
We will be dealing with hermitian $K$-theory, and we prefer avoiding the subtleties of
negative $K$-theory, so we will assume as we did in \cite{Panin:2010aa} that $S$ is regular and that
$\frac 12 \in \Gamma(S,\OO_{S})$.
Let $\Sm/S$ be the category of smooth $S$-schemes of finite type.
Let $\SmOp/S$ be the category whose objects are pairs $(X,U)$ with $X \in \Sm/S$ and $U \subset X$
an open subscheme and whose arrows $f \colon (X,U) \to (X',U')$ are morphisms
$f \colon X \to X'$ of $S$-schemes with $f(U) \subset U'$.  Note that all $X$ in $\Sm/S$ have an ample
family of line bundles.

A \emph{motivic space
over} $S$ is a simplicial presheaf on $\Sm/S$.  We will often write $\pt$ for the base scheme regarded
as a motivic space over itself.
Inverting the motivic weak equivalences in the category of pointed motivic spaces gives
the pointed motivic unstable homotopy category $H_{\bullet}(S)$.

Let $T = \Aff^{1}/(\Aff^{1}-0)$ be the Morel-Voevodsky object.
A \emph{$T$-spectrum} $M$ is a sequence of pointed motivic spaces $(M_{0},M_{1},M_{2},\dots)$
equipped with structural maps $\sigma_{n} \colon M_{n} \wedge T \to M_{n+1}$.
Inverting the stable motivic weak equivalences gives the motivic stable homotopy category $SH(S)$.
A pointed motivic space $X$ has a $T$-suspension spectrum $\Sigma_{T}^{\infty}X$.  For any
$T$-spectrum $M$ there are canonical maps of spectra
\begin{equation}
\label{E:canonical.map}
u_{n} \colon \Sigma_{T}^{\infty}M_{n}(-n) \to M.
\end{equation}


%

%
Both
$H_\bullet(S)$
and
$SH(S)$
are equipped with
closed symmetric monoidal structures, and
$\Sigma^\infty_{T} \colon H_\bullet(S)\to SH(S)$
is a strict symmetric monoidal functor.
The symmetric monoidal structure $(\wedge,\boldsymbol{1}_S = \Sigma^\infty_{T}\pt_+)$
on the homotopy category
$SH(S)$ can be constructed on the model category level
using
symmetric $T$-spectra.

Any $T$-spectrum $A$ defines a cohomology theory on the category
of pointed motivic spaces. Namely, for a pointed space $(X,x)$ one sets
$A^{p,q}(X,x)=Hom_{H_\bullet (S)}(\Sigma^\infty_{T}(X,x), \Sigma^{p,q}(A))$
and
$A^{\ast,\ast}(X,x)= \bigoplus_{p,q\in \ZZ} A^{p,q}(X,x)$.
We write (somewhat inconsistently)
\[
A^{4*,2*}(X,x) = \bigoplus_{i \in \ZZ} A^{4i,2i}(X,x).
\]
For an unpointed space
$X$ we set
$A^{p,q}(X)=A^{p,q}(X_+,+)$, with
$A^{\ast,\ast}(X)$ and $A^{4*,2*}(X)$ defined accordingly.  We will not always write
the pointings explicitly.

Each
$Y \in \Sm/S$
defines an unpointed motivic space which is constant in the simplicial direction
$Hom_{\Sm/S}({-},Y)$.
So we regard smooth $S$-schemes as motivic spaces
and set
$A^{p,q}(Y)=A^{p,q}(Y_+,+)$.  Given a monomorphism $U \hra Y$ of smooth $S$-schemes, we write
$A^{p,q}(Y,U) = A^{p,q}(Y_{+}/U_{+},U_{+}/U_{+})$.


A \emph{commutative ring $T$-spectrum} is a commutative monoid
$(A,\mu,e)$
in
$(SH(S),\wedge, 1)$.

The cohomology theory $A^{\ast,\ast}$ defined by a commutative
ring $T$-spectrum is a ring cohomology theory
satisfying a certain bigraded commutativity condition described by Morel.
Namely, let $\varepsilon \in A^{0,0}(\pt)$ be the element such that
$\Sigma_{T}^{2}\varepsilon \in Hom_{SH(S)}(T \wedge T, T \wedge T)$ is the map
exchanging the two factors $T$.  Then for $\alpha \in A^{p,q}(X,x)$ and $\beta \in A^{p',q'}(X,x)$
we have $\alpha \cup \beta = (-1)^{pp'}\varepsilon^{qq'} \beta \cup \alpha$.
In particular, $A^{4*,2*}(X,x)$ is contained in the center of $A^{*,*}(X,x)$.


We work in this text with the algebraic cobordism
$T$-spectrum
$\MSp$ of \cite[\S 6]{Panin:2010ab}
and the hermitian $K$-theory
$T$-spectrum
$\BO$ of \cite[\S 8]{Panin:2010aa}.
The spectrum
$\MSp$
is a commutative ring $T$-spectrum because it
is naturally a commutative monoid in the category of symmetric $T^{\wedge 2}$-spectra.
The $T$-spectrum
$\BO$
has a commutative monoid structure as shown in
\cite[Theorem 1.3]{Panin:2010aa}.

\section{The first Borel class {}{$b_{1}(E,\phi)$}{p\_1(E,phi)}}

Let $V$ be a vector bundle over a smooth
$S$-scheme $X$ with zero section $z\colon X \hra V$.
The \emph{Thom space} of $V$ is the quotient motivic space $\Th V = V/(V-z(X))$.
It is pointed by the image of $V-z(X)$.  It comes with a canonical
structure map $z \colon X_{+} \to \Th V$ induced by the zero section.
For the trivial bundle $\Aff^{n} \to \pt$ one has $\Th \Aff^{n} = T^{\wedge n}$.

We write $\HH$ for the trivial rank $2$ symplectic bundle
$\left( \OO^{\oplus 2}, \bigl( \begin{smallmatrix} 0 & 1 \\ -1 & 0 \end{smallmatrix} \bigr) \right)$.
The orthogonal direct sum $\HH^{\oplus n}$ is the trivial symplectic bundle of rank $2n$.

The most basic form a symplectic orientation is a symplectic Thom structure
\cite[Definition 7.1]{Panin:2010fk}.  We will use the following version of the definition.

\begin{defn}
\label{D:sp.thom}
Let $(A,\mu,e)$ be a symmetric ring $T$-spectrum.
A \emph{symplectic Thom structure} on the cohomology theory
$A^{*,*}$ is a
rule which assigns to each rank $2$ symplectic bundle $(E,\phi)$ over an $X$
in $\Sm/S$ an element
$\thom(E,\phi) \in A^{4,2}(\Th E) = A^{4,2}(E,E-X)$ with the following properties:
\begin{enumerate}
\item For an isomorphism $u \colon (E,\phi) \cong (E_{1},\phi_{1})$
one has $\thom(E,\phi) = u^{*}\thom(E_{1},\phi_{1})$.

\item For a morphism $f \colon Y \to X$ with pullback map $f_{E} \colon f^{*}E \to E$
one has $f_{E}^{*}\thom(E,\phi) = \thom(f^{*}E,f^{*}\phi)$.

\item For the rank $2$ trivial symplectic bundle $\HH$ over $\pt $ the map
\[
{-}\times \thom(\HH) \colon
A^{*,*}(X) \to A^{*+4,*+2}(X \times \Aff^{2},X \times (\Aff^{2}-0))
\]
is an isomorphism for all $X$.
\end{enumerate}
The \emph{Borel class} of $(E,\phi)$ is
$b_{1}(E,\phi) = -z^{*} \thom(E,\phi) \in A^{4,2}(X)$ where
$z \colon X \to E$ is the zero section.
\end{defn}

The sign in the Borel class is simply conventional.  It is chosen so that if $A^{*,*}$ is an oriented
cohomology theory with an additive formal group law, then the Chern and Borel classes satisfy the
traditional formula $b_{i}(E,\phi) = (-1)^{i}c_{2i}(E)$.

From Mayer-Vietoris one sees that for any rank $2$ symplectic bundle
\begin{equation*}
{}\cup \thom(E,\phi) \colon A^{*,*}(X) \xra{\cong} A^{*,*}(E,E-X)
\end{equation*}
is an isomorphism.

The \emph{quaternionic Grassmannian}
$HGr(r,n) = HGr(r,\HH^{\oplus n})$ is defined as the open subscheme of
$Gr(2r,2n) = Gr(2r,\HH^{\oplus n})$ parametrizing subspaces of dimension $2r$ of the fibers of
$\HH^{\oplus n}$
on which the symplectic form of $\HH^{\oplus n}$ is nondegenerate.  We write $\shf U_{HGr(r,n)}$ for
the restriction to $HGr(r,n)$ of the tautological subbundle of $Gr(2r,2n)$.  The symplectic form of
$\HH^{\oplus n}$ restricts to a symplectic form on $\shf U_{HGr(r,n)}$ which we
denote by $\phi_{HGr(r,n)}$.  The pair $(\shf U_{HGr(r,n)},\phi_{HGr(r,n)})$ is the
\emph{tautological symplectic subbundle} of rank $2r$ on $HGr(r,n)$.

More generally, given a symplectic bundle $(E,\phi)$ of rank $2n$ over $X$, the
\emph{quaternionic Grassmannian bundle} $HGr(r,E,\phi)$ is the open subscheme of
the Grassmannian bundle $Gr(2r,E)$ parametrizing subspaces of dimension $2r$
of the fibers of $E$ on which $\phi$
is nondegenerate.

For $r=1$ we have \emph{quaternionic projective spaces} and \emph{bundles}
$HP^{n} = HGr(1,n+1)$ and $HP(E,\phi) = HGr(1,E,\phi)$.

The quaternionic projective bundle theorem is proven in \cite{Panin:2010fk} using the symplectic Thom
structure and not any other version of a symplectic orientation.  It is proven first for trivial bundles.

\begin{thm}[\protect{\cite[Theorem 8.1]{Panin:2010fk}}]
\label{T:H.proj.bdl.1}
Let $(A,\mu,e)$ be a commutative ring $T$-spectrum with
a symplectic Thom structure on $A^{*,*}$.
Let $(\mathcal U_{HP^{n}}, \phi_{HP^{n}})$ be the tautological rank $2$
symplectic subbundle over $HP^n$ and
$t = b_{1}(\mathcal U_{HP^{n}}, \phi_{HP^{n}}) \in A^{4,2}(HP^{n})$ its Borel class.
Then for any $X$ in $\Sm/S$ we have an isomorphism of bigraded rings
\[
A^{*, *}(HP^n \times X) \cong A^{*,*}(X)[t]/(t^{n+1}).
\]
\end{thm}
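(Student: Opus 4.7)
The plan is to prove the theorem by induction on $n$. The base case $n = 0$ is trivial: $HP^0$ is a single point, so both sides reduce to $A^{*,*}(X)$.

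For the inductive step, I would seek a good geometric decomposition of $HP^n$ that is accessible via the inductive hypothesis together with the axiomatic Thom isomorphism for rank $2$ symplectic bundles. The natural candidate is the stratification $HP^{n-1} \subset HP^n$ coming from the embedding $\HH^{\oplus n} \subset \HH^{\oplus(n+1)}$. One would analyze the $\mathbb{A}^1$-homotopy type of $HP^n \setminus HP^{n-1}$ and of a suitable tubular neighborhood of $HP^{n-1}$, and combine these via Mayer--Vietoris (or a cofiber long exact sequence) to obtain a recursive description $A^{*,*}(HP^n) \cong A^{*,*}(HP^{n-1}) \oplus (\text{top contribution})$. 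The Thom isomorphism $\cup \thom(E,\phi)\colon A^{*,*}(X) \cong A^{*,*}(E, E-X)$ for rank $2$ symplectic bundles, which the excerpt has already promoted from the trivial case of Definition~\ref{D:sp.thom}(3) to general rank-$2$ symplectic bundles via a Mayer--Vietoris patching, is the key input for identifying the top contribution as an appropriate shift of $A^{*,*}(HP^{n-1})$.

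To upgrade the additive identification to the ring identification $A^{*,*}(X)[t]/(t^{n+1})$, one must verify that the multiplicative generator $t \in A^{4,2}(HP^n)$ restricts compatibly to the inductively known generator on $HP^{n-1}$, and that the relation $t^{n+1} = 0$ holds. The relation should be derivable multiplicatively from the symplectic splitting $\HH^{\oplus(n+1)}|_{HP^n} = \mathcal{U}_{HP^n} \oplus \mathcal{U}_{HP^n}^\perp$ (the orthogonal complement with respect to the ambient form), combined with the total Pontryagin class identity $p(\mathcal{U}_{HP^n}) \cdot p(\mathcal{U}_{HP^n}^\perp) = 1$ and the fact that the rank-$2n$ symplectic bundle $\mathcal{U}_{HP^n}^\perp$ has at most $n$ nontrivial Pontryagin classes, which forces $t^{n+1}$ to lie in the ideal of lower powers and hence to vanish (once the requisite symplectic characteristic class formalism is in place).

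The main obstacle is the geometric/$\mathbb{A}^1$-homotopical statement underlying the second paragraph: identifying $HP^n \setminus HP^{n-1}$ or a tubular neighborhood of the pair with something whose cohomology reduces to $A^{*,*}(HP^{n-1})$ via the rank-$2$ symplectic Thom isomorphism, with the correct bidegree shift of $(4, 2)$ per power of $t$ rather than the naive $(8, 4)$ that the codimension-$4$ embedding $HP^{n-1} \hookrightarrow HP^n$ would suggest. This bidegree mismatch means a single application of the rank-$4$ Gysin map cannot suffice; one must either iterate through an intermediate subscheme, extract a rank-$2$ symplectic sub-object from the rank-$4$ normal bundle, or construct the decomposition by a less direct route adapted to the symplectic geometry of $HGr(1,n+1)$. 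Working this out carefully is the technical heart of the argument and is what distinguishes the quaternionic case from the simpler theorem for ordinary projective bundles.
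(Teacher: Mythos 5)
Your proposal correctly locates the central difficulty --- the codimension-$4$ embedding $HP^{n-1}\hookrightarrow HP^n$ has a rank-$4$ normal bundle, so a single application of homotopy purity and a symplectic Thom isomorphism would produce a shift of $(8,4)$, whereas the theorem requires the generators $1,t,\dots,t^n$ to sit in bidegrees spaced $(4,2)$ apart. But having located the difficulty you do not resolve it; the last paragraph lists several possibilities without choosing one, and in fact none of the listed possibilities is straightforward. Two further problems compound this. First, unlike the topological fact that $\mathbb{HP}^n\setminus\mathbb{HP}^{n-1}\cong\mathbb{R}^{4n}$, the open complement $HP^n\setminus HP^{n-1}$ is \emph{not} $\mathbb{A}^1$-contractible in the motivic setting, so even the additive splitting of the long exact sequence of the pair $(HP^n,HP^n\setminus HP^{n-1})$ is not obvious. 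This is exactly why \cite{Panin:2010fk} has to develop nontrivial geometry of quaternionic Grassmannians before the cohomological argument can even begin: they construct a \emph{different} open subscheme $N\subset HP^n$ (not the complement of $HP^{n-1}$) with a manageable $\mathbb{A}^1$-homotopy type, and identify the quotient $HP^n/N$ with the Thom space of a \emph{rank-$2$} symplectic bundle over $HP^{n-1}$ so that the Thom isomorphism from the axioms applies directly with the correct $(4,2)$ shift. Reproducing that geometric input is the actual content of the theorem, and it is absent from the proposal.

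Second, your argument for the relation $t^{n+1}=0$ is circular. You invoke the Whitney sum formula for total Pontryagin classes and the vanishing of $p_i(\mathcal{U}^\perp_{HP^n})$ for $i>n$, but in the logical order of \cite{Panin:2010fk} the higher Pontryagin classes $p_i(E,\phi)$ for $i\geq 2$ are \emph{defined} via the quaternionic projective bundle theorem (Definition~\ref{D:PontryaginClasses} here, resting on Theorem~\ref{T:H.proj.bdl.2}, resting on the very Theorem~\ref{T:H.proj.bdl.1} you are trying to prove), and the Cartan sum formula (Theorem~\ref{T:p1.sum}, from \cite[Theorem 10.5]{Panin:2010fk}) is likewise downstream. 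In the actual proof the relation $t^{n+1}=0$ must be extracted from the cofiber sequence itself, not from a pre-existing characteristic-class calculus. So the proposal has the right general shape (cofiber sequence plus Thom isomorphism plus induction) but both the geometric heart and the vanishing of $t^{n+1}$ are left as genuine gaps.
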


A Mayer-Vietoris argument gives the more general theorem \cite[Theorem 8.2]{Panin:2010fk}.

\begin{thm}[\protect{Quaternionic projective bundle theorem}]
\label{T:H.proj.bdl.2}
Let $(A,\mu,e)$ be a commutative ring $T$-spectrum with
a symplectic Thom structure on $A^{*,*}$.
Let $(E,\phi)$  be a symplectic bundle of rank $2n$ over $X$, let
$(\mathcal U, \phi|_{\mathcal U})$ be the tautological rank $2$
symplectic subbundle over
the quaternionic projective bundle $HP(E, \phi)$, and let
$t = b_{1}(\mathcal U, \phi|_{\mathcal U})$
be its Borel class. Then
we have an isomorphism of bigraded $A^{*,*}(X)$-modules
\[
(1, t, \dots , t^{n-1}) \colon
 A^{*,*}(X) \oplus A^{*,*}(X) \oplus \dots \oplus
A^{*,*}(X) \to A^{*,*}(HP(E, \phi)).
\]
\end{thm}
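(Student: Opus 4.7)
The plan is to deduce Theorem \ref{T:H.proj.bdl.2} from the trivial--bundle case Theorem \ref{T:H.proj.bdl.1} by a Mayer--Vietoris induction on the size of a Zariski cover of $X$ over which the symplectic bundle $(E,\phi)$ trivializes.

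First I would use the fact that $\mathrm{Sp}_{2n}$ is a special algebraic group, so every symplectic bundle on a scheme in $\Sm/S$ is Zariski locally trivial. Hence I can fix a finite Zariski cover $X = U_{1} \cup \cdots \cup U_{N}$ together with isomorphisms $(E,\phi)|_{U_{i}} \cong \HH^{\oplus n} \times U_{i}$ of symplectic bundles, and induct on $N$. When $N = 1$, the functoriality of $HGr(1,-,-)$ gives $HP(E,\phi) \cong HP^{n-1} \times X$, the tautological bundle $(\mathcal U,\phi|_{\mathcal U})$ pulls back from $(\mathcal U_{HP^{n-1}}, \phi_{HP^{n-1}})$, and thus $t$ is the pullback of the generator appearing in Theorem \ref{T:H.proj.bdl.1}; so that theorem supplies exactly the required isomorphism.

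For the inductive step I set $U = U_{1} \cup \cdots \cup U_{N-1}$ and $V = U_{N}$, so that $X = U \cup V$. By induction the theorem holds for $(E,\phi)$ restricted to $U$, to $V$, and to $U \cap V$ (the last because $U \cap V$ is covered by the $N{-}1$ opens $U_{i} \cap V$ with $i < N$). For a Zariski cover $X = U \cup V$ of a smooth $S$-scheme by open subschemes, the square with vertices $U \cap V$, $U$, $V$, $X$ is a homotopy pushout of pointed motivic spaces (a distinguished Nisnevich--and in fact Zariski--square), producing a Mayer--Vietoris long exact sequence in $A^{*,*}$. The same recipe applied to the induced open cover $HP(E,\phi) = HP(E|_{U},\phi|_{U}) \cup HP(E|_{V},\phi|_{V})$ with intersection $HP(E|_{U \cap V}, \phi|_{U \cap V})$ yields a second Mayer--Vietoris sequence. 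Because $t = p_{1}(\mathcal U, \phi|_{\mathcal U})$ is globally defined on $HP(E,\phi)$ and pulls back to the corresponding Pontryagin class on each restriction, the $A^{*,*}(X)$-module map $(1,t,\dots,t^{n-1})$ induces a morphism between these two Mayer--Vietoris sequences in every bidegree. By the inductive hypothesis it is an isomorphism at the $U$, $V$, and $U \cap V$ terms, so the five lemma forces it to be an isomorphism at $X$, completing the induction.

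The main obstacle is verifying compatibility with the connecting homomorphisms, i.e.\ that cup product with the globally defined class $t$ commutes with the boundary maps in the two Mayer--Vietoris sequences. This is essentially formal once everything is phrased in terms of the cofiber sequence attached to the Zariski square, but it is the step requiring real care: one must see the map $(1,t,\dots,t^{n-1})$ as arising from a morphism of cofiber sequences in $SH(S)$, not only as a collection of maps on cohomology groups. A secondary but necessary point is the Zariski-local triviality of symplectic bundles over smooth $S$-schemes, which follows from Serre's theorem that torsors under a special group are Zariski locally trivial, applied to $\mathrm{Sp}_{2n}$.
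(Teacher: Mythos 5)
Your proposal matches the paper's stated approach: the paper itself supplies no proof here, only the remark ``A Mayer--Vietoris argument gives the more general theorem'' with a citation to \cite[Theorem 8.2]{Panin:2010fk}, and your Zariski--local--triviality (via speciality of $\mathrm{Sp}_{2n}$) plus Mayer--Vietoris induction from Theorem \ref{T:H.proj.bdl.1}, with the five lemma applied to the morphism of Mayer--Vietoris sequences induced by the globally defined class $t$, is exactly that argument. Your flagging of the compatibility of $\cup\, t$ with the connecting homomorphisms as the point requiring care is also well placed.
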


\begin{defn}
\label{D:BorelClasses}
Under the hypotheses of Theorem
\ref{T:H.proj.bdl.2}
there are unique elements
$b_i(E, \phi) \in A^{4i,2i}(X)$ for $i=1,2, \dots , n$
such that
$$t^{n}-b_1(E, \phi)\cup t^{{n-1}} + b_2(E, \phi)\cup t^{{n-2}} - \dots + (-1)^n b_n(E, \phi)=0.$$
The classes
$b_i(E, \phi)$
are called the \emph{Borel classes}
of $(E, \phi)$ with respect to the symplectic Thom structure of the cohomology theory $(A, \partial)$.
For $i > n$ one sets $b_i(E, \phi)$ = 0, and one sets $b_0(E, \phi) = 1$.
\end{defn}


\begin{cor}
\label{C:pont.triv}
The Borel classes of a trivial symplectic bundle vanish\textup{:} $b_{i}(\HH^{\oplus n}) = 0$.
\end{cor}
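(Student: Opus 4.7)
The plan is to deduce the corollary directly from the two quaternionic projective bundle theorems by comparing the defining relation of the Pontryagin classes with the relation furnished by the trivial-bundle projective bundle theorem.

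First, identify the relevant quaternionic projective bundle. The bundle $\HH^{\oplus n}$ has rank $2n$, so $HP(\HH^{\oplus n}) = HP^{n-1} \times X$ and the tautological rank $2$ symplectic subbundle $(\shf U, \phi|_{\shf U})$ is the pullback of $(\shf U_{HP^{n-1}},\phi_{HP^{n-1}})$. Setting $t = p_{1}(\shf U,\phi|_{\shf U})$, Theorem \ref{T:H.proj.bdl.1} gives the ring isomorphism
\[
A^{*,*}(HP^{n-1} \times X) \cong A^{*,*}(X)[t]/(t^{n}),
\]
so that $t^{n} = 0$ in $A^{*,*}(HP(\HH^{\oplus n}))$.

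Next, feed this into Definition \ref{D:PontryaginClasses} for $(E,\phi) = \HH^{\oplus n}$. The defining relation reads
\[
t^{n} - p_{1}(\HH^{\oplus n}) \cup t^{n-1} + p_{2}(\HH^{\oplus n}) \cup t^{n-2} - \cdots + (-1)^{n} p_{n}(\HH^{\oplus n}) = 0.
\]
Substituting $t^{n} = 0$ leaves
\[
-p_{1}(\HH^{\oplus n}) \cup t^{n-1} + p_{2}(\HH^{\oplus n}) \cup t^{n-2} - \cdots + (-1)^{n} p_{n}(\HH^{\oplus n}) = 0.
\]
Finally, apply Theorem \ref{T:H.proj.bdl.2}: the family $(1,t,\dots,t^{n-1})$ is an $A^{*,*}(X)$-basis of $A^{*,*}(HP(\HH^{\oplus n}))$, so the above linear combination forces every coefficient $p_{i}(\HH^{\oplus n})$ to vanish for $1 \le i \le n$. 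For $i > n$ the classes vanish by the convention in Definition \ref{D:PontryaginClasses}.

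There is no real obstacle here: the only subtle point is making sure one uses the precise formulation of Theorem \ref{T:H.proj.bdl.1}, which already incorporates the $X$-parametrized version (so no additional Mayer--Vietoris step is needed); the rest is immediate from uniqueness of the coefficients in the projective bundle decomposition.
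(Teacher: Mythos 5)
Your argument is correct and is the natural way to fill in the proof the paper omits: identify $HP(\HH^{\oplus n}) = HP^{n-1}\times X$, use Theorem~\ref{T:H.proj.bdl.1} (shifted down by one) to get $t^{n}=0$ in $A^{*,*}(HP^{n-1}\times X)\cong A^{*,*}(X)[t]/(t^{n})$, and then invoke the uniqueness in Definition~\ref{D:PontryaginClasses} (equivalently, the $A^{*,*}(X)$-linear independence of $1,t,\dots,t^{n-1}$ furnished by Theorem~\ref{T:H.proj.bdl.2}) to conclude that all $p_{i}(\HH^{\oplus n})$ vanish. You also correctly note that the tautological subbundle on $HP(\HH^{\oplus n})$ is the pullback of $\shf U_{HP^{n-1}}$, which is what makes the two occurrences of $t$ agree; that is the only point that genuinely needs checking, and you addressed it.
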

%
%

The Cartan sum formula holds for Borel classes \cite[Theorem 10.5]{Panin:2010fk}.
In particular:

\begin{thm}
\label{T:p1.sum}
Let $(A,\mu,e)$ be a commutative ring $T$-spectrum with
a symplectic Thom structure on $A^{*,*}$.
Let $(E,\phi)$ and $(F,\psi)$ be symplectic bundles over $X$.  Then we have
\begin{equation}
\label{E:sum.formula}
b_{1}\bigl( (E,\phi) \oplus (F,\psi) \bigr) = b_{1}(E,\phi) + b_{1}(F,\psi).
\end{equation}
\end{thm}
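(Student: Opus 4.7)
The plan is a standard splitting-principle argument followed by a computation in a universal model on a product of quaternionic projective spaces.

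First I would reduce to the rank $2$ case. On $\pi_E \colon HP(E,\phi) \to X$ the pullback of $(E,\phi)$ splits orthogonally as $\mathcal{U} \oplus \mathcal{U}^{\perp}$, since $\phi|_{\mathcal{U}}$ is nondegenerate; by Theorem~\ref{T:H.proj.bdl.2} the map $\pi_E^{*}$ is split-injective on $A^{*,*}$. Iterating this on $E$ and then on $F$ produces a morphism $\pi \colon Y \to X$ on which both bundles become orthogonal direct sums of rank $2$ symplectic subbundles and $\pi^{*}$ is still injective. By naturality of $p_{1}$ and induction on the total number of summands, the theorem reduces to the basic case $p_{1}(L \oplus M) = p_{1}(L) + p_{1}(M)$ for rank $2$ symplectic bundles $(L,\alpha), (M,\beta)$ on $X$.

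For this basic case I would work on the $HP^{1}$-bundle $\pi \colon HP(L \oplus M) \to X$. Let $\mathcal{U}$ be its tautological rank $2$ symplectic subbundle and $t := p_{1}(\mathcal{U})$. By Theorem~\ref{T:H.proj.bdl.2} and Definition~\ref{D:PontryaginClasses},
\[
t^{2} - p_{1}(L \oplus M)\cup t + p_{2}(L \oplus M) = 0.
\]
The summands $L, M \subset L \oplus M$ yield sections $s_{L}, s_{M} \colon X \to HP(L \oplus M)$ with $s_{L}^{*}\mathcal{U} \cong L$ and $s_{M}^{*}\mathcal{U} \cong M$. Pulling back by $s_{L}$ and $s_{M}$ and subtracting the two resulting quadratic identities yields
\[
\bigl(p_{1}(L) - p_{1}(M)\bigr) \cup \gamma = 0, \qquad \gamma := p_{1}(L \oplus M) - p_{1}(L) - p_{1}(M) \in A^{4,2}(X).
\]

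Finally, to conclude $\gamma = 0$, I would pass to a universal setting where the first factor is sufficiently nondegenerate. Because $\tfrac{1}{2} \in \Gamma(S,\OO_S)$, every rank $2$ symplectic bundle embeds symplectically into the hyperbolic bundle $L \oplus L^{*}$ via $e \mapsto (e, \tfrac{1}{2}\phi_L^{\flat}(e))$; together with the ample family of line bundles on $X$ this produces a classifying map $X \to HP^{N}$ for some $N$ pulling back the tautological bundle to $(L,\alpha)$. Applied to both $L$ and $M$ one gets $(f,g) \colon X \to HP^{N} \times HP^{N}$ classifying the pair, and by naturality of $p_1$ it suffices to verify $\gamma = 0$ in the universal cohomology
\[
A^{*,*}(HP^{N} \times HP^{N}) \cong A^{*,*}(\pt)[t_{1}, t_{2}]/(t_{1}^{N+1}, t_{2}^{N+1})
\]
from Theorem~\ref{T:H.proj.bdl.1}. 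Writing a bidegree $(4,2)$ class as $\gamma = a + b t_{1} + c t_{2}$ with $a \in A^{4,2}(\pt)$ and $b, c \in A^{0,0}(\pt)$, expanding $(t_{1} - t_{2})\gamma = 0$ in the free basis $\{t_{1}, t_{2}, t_{1}^{2}, t_{1}t_{2}, t_{2}^{2}\}$ of the bidegree $(8,4)$ part forces $a = b = c = 0$ as soon as $N \geq 2$. The main technical obstacle is this final reduction to the universal case, which is where the hypotheses $\tfrac{1}{2} \in \Gamma(S,\OO_S)$ and the ample family of line bundles on $X$ genuinely intervene; everything else is a formal consequence of the quaternionic projective bundle theorem and a finite linear-algebra computation.
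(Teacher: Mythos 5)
Your argument for the "rank~$2$ $\oplus$ rank~$2$" case is appealing and the final linear algebra on $HP^{N}\times HP^{N}$ is correct, but there is a genuine gap in the very first reduction, and it is the gap that matters.

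You claim that after pulling back to a splitting space $Y$ (so $\pi^{*}E\cong L_{1}\oplus\cdots\oplus L_{m}$ and $\pi^{*}F\cong M_{1}\oplus\cdots\oplus M_{n}$ with each summand of rank~$2$), the theorem "reduces by induction on the number of summands to the basic case $p_{1}(L\oplus M)=p_{1}(L)+p_{1}(M)$ for rank~$2$ bundles." That induction does not close. Knowing the additivity of $p_{1}$ only for a pair of \emph{rank-$2$} bundles gives you $p_{1}(L_{1}\oplus L_{2})=p_{1}(L_{1})+p_{1}(L_{2})$, but the next step would require $p_{1}\bigl((L_{1}\oplus L_{2})\oplus L_{3}\bigr)=p_{1}(L_{1}\oplus L_{2})+p_{1}(L_{3})$, and $L_{1}\oplus L_{2}$ has rank~$4$, not~$2$. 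What you would actually need as a base case is additivity when one factor is rank~$2$ and the other has arbitrary even rank, i.e.\ $p_{1}(L\oplus E)=p_{1}(L)+p_{1}(E)$ for $L$ of rank~$2$ and $E$ of rank~$2n$. Your section argument does not yield this: on $\pi\colon HP(L\oplus E)\to X$, the sub-bundle $L$ still gives a section $s_{L}$ with $s_{L}^{*}t=p_{1}(L)$, but $E$ does \emph{not} give a section of $HP(L\oplus E)$ when $n>1$ (it gives a closed sub-$HP^{n-1}$-bundle $HP(E)$, not a section), so the two quadratic identities you subtract never materialize. Pulling back the degree-$(n{+}1)$ relation by $s_{L}$ gives one equation in the $n{+}1$ unknowns $p_{i}(L\oplus E)$; restricting it to $HP(E)$ and reducing modulo the degree-$n$ relation for $E$ gives $n$ more, but extracting $p_{1}(L\oplus E)=p_{1}(L)+p_{1}(E)$ from this system is exactly the content that has to be proved and requires either a Gysin/uniqueness argument showing $(t-p_{1}(L))\cdot P_{E}(t)=0$ in $A^{*,*}(HP(L\oplus E))$, or a multiplicativity statement for higher-rank symplectic Thom classes — neither of which is in your proposal.

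Two smaller remarks. First, the symplectic-embedding step ("$L$ embeds into $L\oplus L^{*}$, and the ample family then gives a classifying map $X\to HP^{N}$") is stated loosely: the hyperbolic bundle $L\oplus L^{\vee}$ is not trivial, so it does not by itself yield a map to $HP^{N}$. The correct statement is that a locally split embedding $i\colon L\hookrightarrow\OO^{N}$ (from the ample family) together with a retraction $r$ of $i$ gives a symplectic embedding $e\mapsto(i(e),\,\pm\tfrac12\,r^{\vee}\phi_{L}^{\flat}(e))$ of $(L,\phi_L)$ into $\HH^{\oplus N}=H(\OO^{N})$; this is standard and uses $\tfrac12\in\Gamma(S,\OO_S)$, but it should be stated this way. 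Second, note that the statement in the paper is quoted from \cite[Theorem~10.5]{Panin:2010fk}, where the full Cartan sum formula for the total Pontryagin class is proved; the present theorem is only the degree-one piece of it, and the proof there necessarily goes beyond the bare quaternionic projective bundle theorem precisely because of the induction obstruction you have run into.
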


%


We also have the following result \cite[Proposition 8.5]{Panin:2010fk}.

\begin{prop}
\label{P:subl}
Suppose that $( E, \phi)$ is a symplectic bundle over
$X$ with a totally isotropic subbundle $L \subset  E$.
Then for all $i$ we have
\[
b_i( E,\phi) = b_{i} \left( ( L^\perp/ L, \overline \phi) \oplus
\bigl(  L \oplus  L^{\vee} ,
\bigl( \begin{smallmatrix} 0&1_{L^{\vee}}\\-1_{L}&0 \end{smallmatrix} \bigr) \bigr) \right).
\]
\end{prop}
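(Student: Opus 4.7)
My plan is a deformation-to-the-associated-graded argument, exploiting the self-duality of the filtration $0 \subset L \subset L^\perp \subset E$ under $\phi$: by definition $L^\perp$ is the $\phi$-orthogonal of $L$, and $(L^\perp)^\perp = L$, so $\phi$ induces a nondegenerate form $\bar\phi$ on the middle graded piece $L^\perp/L$ and a perfect pairing between $L$ and $E/L^\perp$ via $e \mapsto \phi(-,e)|_L$, which identifies $E/L^\perp \cong L^\vee$. Consequently, the associated graded $\mathrm{gr}(E) = L \oplus (L^\perp/L) \oplus L^\vee$ inherits exactly the orthogonal sum $(L^\perp/L, \bar\phi) \oplus (L \oplus L^\vee, h)$ with $h = \bigl(\begin{smallmatrix}0 & 1_{L^\vee}\\-1_L & 0\end{smallmatrix}\bigr)$.

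The first step is to construct a symplectic bundle $(\tilde E, \tilde\phi)$ over $X \times \Aff^1$ that interpolates between $(E,\phi)$ at $t=1$ and $(L^\perp/L, \bar\phi) \oplus (L \oplus L^\vee, h)$ at $t=0$. I would use the standard Rees construction applied to $L \subset L^\perp \subset E$, which produces a locally free $\OO_{X \times \Aff^1}$-module $\tilde E$ restricting to $E$ over $\{t \neq 0\}$ and to $\mathrm{gr}(E)$ over $\{t=0\}$. The self-duality of the filtration then allows $\phi$ to extend canonically to a non-degenerate skew-symmetric form $\tilde\phi$ on $\tilde E$ with the prescribed restrictions at $t = 0$ and $t = 1$.

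With this family in hand, one considers the Pontryagin classes $p_i(\tilde E, \tilde\phi) \in A^{4i,2i}(X \times \Aff^1)$ and restricts them along the two sections $i_0, i_1 \colon X \to X \times \Aff^1$ of the projection $p \colon X \times \Aff^1 \to X$. By $\Aff^1$-homotopy invariance of $A^{*,*}$, the map $p^*$ is an isomorphism; since $i_0$ and $i_1$ are both sections of $p$, we conclude $i_0^* = i_1^*$ on $A^{*,*}(X \times \Aff^1)$. Functoriality of the Pontryagin classes (immediate from Definition \ref{D:PontryaginClasses}) then gives
\[
p_i(E,\phi) = i_1^* p_i(\tilde E, \tilde\phi) = i_0^* p_i(\tilde E, \tilde\phi) = p_i\bigl((L^\perp/L, \bar\phi) \oplus (L \oplus L^\vee, h)\bigr),
\]
which is the desired identity.

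The main obstacle is the first step: writing down the symplectic Rees bundle $(\tilde E, \tilde\phi)$ and checking that $\tilde\phi$ is non-degenerate for every $t \in \Aff^1$. The underlying deformation of filtered vector bundles to their associated graded is standard, but verifying that the skew-symmetric form $\phi$ extends across the whole family requires unwinding the compatibility of $\phi$ with the self-dual filtration and doing an explicit local check. Once this is done, the remainder of the argument is formal.
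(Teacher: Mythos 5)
Your proposal is exactly the argument the paper sketches: the paper states only that ``there is an $\Aff^1$-deformation between the two symplectic bundles,'' citing \cite[Proposition 8.5]{Panin:2010fk}, and your Rees-bundle construction plus $\Aff^1$-homotopy invariance of $A^{*,*}$ is precisely the standard way to realize and exploit that deformation. The approach matches; you simply supply the details the paper leaves implicit.
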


This is because there is an $\Aff^{1}$-deformation between the two symplectic bundles.


\begin{defn}
\label{D:GW.bundles}
The \emph{Grothendieck-Witt group of symplectic bundles}  $GW^{-}(X)$
is the abelian group of formal differences $[E,\phi]-[F,\psi]$ of symplectic vector bundles
over $X$ modulo three relations:
\begin{enumerate}
\item For an isomorphism $u \colon (E,\phi) \cong (E_{1},\phi_{1})$
one has $[E,\phi] = [E_{1},\phi_{1}]$.
\item For an orthogonal direct sum one has $[(E,\phi) \oplus (E_{1},\phi_{1})]
= [E,\phi] + [E_{1},\phi_{1}]$.
\item If $( E, \phi)$ is a symplectic bundle over
$X$ with a totally isotropic subbundle $L \subset  E$, then we have
$[E,\phi] = [L^\perp/ L, \overline \phi] +
\left[  L \oplus  L^{\vee} ,
\bigl( \begin{smallmatrix} 0&1\\-1&0 \end{smallmatrix} \bigr) \right]$.
\end{enumerate}

The \emph{Grothendieck-Witt group of orthogonal bundles} $GW^{+}(X)$ is defined analogously.
\end{defn}

\begin{thm}
Let $(A,\mu,e)$ be a commutative ring $T$-spectrum with
a symplectic Thom structure on $A^{*,*}$.  Then the associated first Borel class
induces a well-defined additive map
\[
b_{1} \colon GW^{-}(X) \to A^{4,2}(X)
\]
which is functorial in $X$.
\end{thm}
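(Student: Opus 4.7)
The plan is to show that the assignment $(E,\phi) \mapsto p_1(E,\phi)$ descends from the free abelian monoid on isomorphism classes of symplectic bundles over $X$ to $GW^{-}(X)$ by directly verifying the three defining relations of Definition~\ref{D:GW.bundles}. Then functoriality in $X$ is read off from the pullback axiom of a symplectic Thom structure.

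First I would define
\[
\tilde p_{1} \colon \bigoplus_{[E,\phi]} \ZZ \lra A^{4,2}(X), \qquad [E,\phi] \mapsto p_{1}(E,\phi),
\]
on the free abelian group generated by isomorphism classes of symplectic vector bundles over $X$. Relation (1) of Definition~\ref{D:GW.bundles} is then immediate from part~(1) of Definition~\ref{D:sp.thom}: if $u\colon (E,\phi) \cong (E_{1},\phi_{1})$ is an isomorphism, then $u^{*}\thom(E_{1},\phi_{1}) = \thom(E,\phi)$, hence $z^{*}\thom(E,\phi) = z^{*}\thom(E_{1},\phi_{1})$ in $A^{4,2}(X)$, and so $p_{1}(E,\phi) = p_{1}(E_{1},\phi_{1})$.

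Next, relation (2) is exactly the $p_{1}$-part of the Cartan sum formula, namely Theorem~\ref{T:p1.sum}: for symplectic bundles $(E,\phi)$ and $(F,\psi)$ over $X$ one has $p_{1}\bigl((E,\phi)\oplus(F,\psi)\bigr) = p_{1}(E,\phi) + p_{1}(F,\psi)$. For relation (3), suppose $(E,\phi)$ has a totally isotropic subbundle $L\subset E$. Applying Proposition~\ref{P:subl} with $i=1$ and then using relation (2) (already established) on the right-hand side yields
\[
p_{1}(E,\phi) = p_{1}(L^{\perp}/L,\bar\phi) + p_{1}\!\left( L\oplus L^{\vee},\bigl( \begin{smallmatrix} 0 & 1_{L^{\vee}} \\ -1_{L} & 0 \end{smallmatrix}\bigr) \right),
\]
which is precisely the compatibility required by Definition~\ref{D:GW.bundles}(3). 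Consequently $\tilde p_{1}$ descends to a homomorphism $p_{1}\colon GW^{-}(X) \to A^{4,2}(X)$.

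Finally, functoriality: for $f\colon Y\to X$ in $\Sm/S$, part~(2) of Definition~\ref{D:sp.thom} gives $f_{E}^{*}\thom(E,\phi) = \thom(f^{*}E,f^{*}\phi)$, and naturality of the zero section yields $f^{*}p_{1}(E,\phi) = p_{1}(f^{*}E,f^{*}\phi)$; this is compatible with the obvious pullback $f^{*}\colon GW^{-}(X)\to GW^{-}(Y)$. There is no serious obstacle: every ingredient (isomorphism invariance, Cartan additivity for $p_{1}$, and the sublagrangian deformation) has already been established earlier in the paper, so the proof is essentially a bookkeeping exercise assembling Definition~\ref{D:sp.thom}, Theorem~\ref{T:p1.sum}, and Proposition~\ref{P:subl}. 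The only mild subtlety worth flagging is to make sure the hyperbolic summand in Proposition~\ref{P:subl} really matches the generator of relation (3) in Definition~\ref{D:GW.bundles}, which it does up to the harmless identifications $1_{L} = 1_{L^{\vee}} = 1$ used in the definition.
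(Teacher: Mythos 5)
The paper states this theorem without a written proof; it is clearly offered as a routine consequence of the three preparatory results (Definition~\ref{D:sp.thom}, Theorem~\ref{T:p1.sum}, Proposition~\ref{P:subl}) that precede it. Your proof correctly identifies and assembles exactly those ingredients, and the argument is sound: relation~(2) is Theorem~\ref{T:p1.sum}; relation~(3) is Proposition~\ref{P:subl} for $i=1$ followed by Theorem~\ref{T:p1.sum} applied to the orthogonal sum on its right-hand side; descending from the free abelian group on isomorphism classes to $GW^{-}(X)$ then gives the additive map.

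One point is worth tightening. You appeal to Definition~\ref{D:sp.thom}(1)--(2) for isomorphism invariance and pullback compatibility, but that definition concerns only rank-$2$ symplectic bundles, for which $p_{1}(E,\phi) = -z^{*}\thom(E,\phi)$. The group $GW^{-}(X)$ is generated by symplectic bundles of arbitrary even rank, and for rank $2n>2$ the class $p_{1}(E,\phi)$ is the one produced by the quaternionic projective bundle theorem via Definition~\ref{D:PontryaginClasses}, not by the zero-section formula. Isomorphism invariance and the identity $f^{*}p_{1}(E,\phi)=p_{1}(f^{*}E,f^{*}\phi)$ in higher rank therefore need to be cited as consequences of the naturality of that construction (equivalently, of the symplectic splitting principle), rather than read off directly from Definition~\ref{D:sp.thom}. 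This is a presentational issue only; the substance of the argument is correct and is the intended one.
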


%

In \cite{Schlichting:2010vn} Schlichting constructed hermitian $K$-theory spaces for
exact categories.  This gives hermitian $K$-theory spaces $KO(X)$ and $KSp(X)$ for
orthogonal and symplectic bundles on schemes.  Their $\pi_{0}$ are $GW^{+}(X)$ and $GW^{-}(X)$
respectively.
In \cite{Schlichting:2010uq} he constructed Hermitian $K$-theory spaces $KO^{[m]}(X,U)$
for complexes of vector bundles on $X$ acyclic on the open subscheme $U$ equipped with a
nondegenerate symmetric bilinear form for the duality shifted by $m$.  For an even integer
$2n$ an orthogonal bundle $(U,\psi)$ gives a chain complex $U[2n]$ equipped with a nondegenerate
symmetric bilinear form $\psi[4n] \colon U[2n] \otimes_{\OO_{X}} U[2n] \to \OO_{X}[4n]$
in the symmetric monoidal category $D^{b}(VB_{X})$.
For an odd integer $2n+1$ a symplectic bundle $(E,\phi)$ gives a chain complex $E[2n{+}1]$
equipped with a nondegenerate symmetric bilinear form
$\phi[4n{+}2]\colon E[2n{+}1]\otimes_{\OO_{X}} E[2n{+}1] \to \OO_{X}[4n{+}2]$.  These functors
induce homotopy equivalences of spaces
$KO(X) \to KO^{[4n]}(X)$ and $KSp(X) \to KO^{[4n{+}2]}(X)$
\cite[Proposition 6]{Schlichting:2010uq}.

The simplicial presheaves $X \mapsto KO^{[n]}(X)$ are pointed motivic spaces.  D\'evissage gives
schemewise weak equivalences $KO^{[n]}(X) \to KO^{[n+1]}(X \times \Aff^{1},X \times (\Aff^{1}-0))$
which are adjoint to maps $KO^{[n]} \times T \to KO^{[n+1]}$.   These are the structural maps of a
$T$-spectrum $(KO^{[0]},KO^{[1]},KO^{[2]},\dots)$ of which our $\BO$ is a fibrant replacement
\cite[\S\S7--8]{Panin:2010aa}.  One has $KO_{i}^{[n]}(X,U) = \BO^{4n-i,2n}(X_{+}/U_{+})$
for all $i \geq 0$ and $n$.  Hence $\BO^{4n,2n}(X_{+}/U_{+})$ is the Grothendieck-Witt group
for the usual duality shifted by $n$ of symmetric chain complexes of vector bundles on $X$ which
are acyclic on $U$.

\begin{defn}
\label{D:proper.iso}
The \emph{right isomorphisms} are
\begin{eqnarray*}
\unsigntrans _{4n} \colon \;\; GW^{+}(X) & \overset{\cong}{\lra} &
KO_{0}^{[4n]}(X) =
\BO^{8n,4n}(X)
\\ {}
[U,\psi] & \longmapsto &
\bigl[ U[2n],\psi[4n] \bigr]
\end{eqnarray*}
and
\begin{eqnarray*}
\signtrans _{4n+2} \colon \;\; GW^{-}(X) & \overset{\cong}{\lra} &
KO_{0}^{[4n+2]}(X) =
\BO^{8n+4,4n+2}(X)
\\ {}
[E,\phi] & \longmapsto
& -\bigl[ E[2n+1], \phi[4n+2] \bigr]
\end{eqnarray*}
\end{defn}

The sign in $\signtrans_{4n+2}$ is chosen so that it commutes with the forgetful maps
to  $K_{0}(X)$, where we have $[E] = -\bigl[ E[2n+1] \bigr]$.
Most authors of papers on Witt groups do not use this sign because Witt groups do not
have forgetful maps to $K_{0}(X)$.

\begin{defn}
\label{D:periodicity}
The \emph{periodicity elements} $\beta_{8} \in \BO^{8,4}(\pt)$ and $\beta_{8}^{-1} \in \BO^{-8,-4}(\pt)$
correspond to the unit $1 = [\OO_{X},1] \in GW^{+}(X)$ under the
isomorphisms $\BO^{8,4}(\pt) \cong GW^{+}(\pt) \cong \BO^{-8,-4}(\pt)$
of Definition \ref{D:proper.iso}.
\end{defn}

We have the composition
\begin{equation}
\label{E:p1.A}
\widetilde b_{1}^{A} \colon \BO^{4,2}(X)
\xla[\cong]{\signtrans_{2}}
GW^{-}(X)
\overset{b_{1}}{\lra}
A^{4,2}(X)
\end{equation}

The Thom classes for hermitian $K$-theory are constructed by the
same method that Nenashev used for Witt groups \cite[\S 2]{Nenashev:2007rm}.
Suppose we have an $SL_{n}$-bundle
 $(E,\lambda)$ consisting of a vector bundle $\pi \colon E \to X$ of rank $n$
and $\lambda \colon \OO_{X} \cong \det E$ an isomorphism of line bundles.
The pullback $\pi^{*}E = E \oplus E \to E$ has a canonical section $\Delta_{E}$, the diagonal.
There is a Koszul complex
\[
K(E) = \bigl( 0 \to \Lambda^{n} \pi^{*}E^{\vee} \to \Lambda^{n-1} \pi^{*}E^{\vee} \to \cdots \to
\Lambda^{2} \pi^{*}E^{\vee} \to E^{\vee} \to \OO_{E} \to 0 \bigr)
\]
in which each boundary map the contraction with $\Delta_{E}$.    It is a locally free
resolution of the coherent sheaf $z_{*}\OO_{X}$ on $E$.  There is a canonical isomorphism
$\varTheta(E,\lambda) \colon K(E) \to K(E)^{\vee} [n]$ induced by $\lambda$
which is symmetric for the shifted duality.

\begin{defn}
\label{D:BO.thom}
In the \emph{standard special linear Thom structure} on $\BO$ the Thom
class of the special linear bundle $(E,\lambda)$ of rank $n$ is
\begin{equation*}
\thom^{\BO}(E,\lambda) = [K(E),\varTheta(E,\lambda)] \in KO_{0}^{[n]}(E,E-X) = \BO^{2n,n}(E,E-X)
\end{equation*}
In the \emph{standard symplectic Thom structure} on $\BO$ the Thom
class of the symplectic bundle $(E,\phi)$ of rank $2r$ is
\begin{equation*}
\thom^{\BO}(E,\phi) = \thom^{\BO}(E, \lambda_{\phi}) \in \BO^{4r,2r}(E,E-X)
\end{equation*}
for $\lambda_{\phi} = (\Pf \phi)^{-1}$ where $\Pf \phi \in \Gamma(X,\det E^{\vee})$ denotes
the Pfaffian of $\phi \in \Gamma(X,\Lambda^{2} E^{\vee})$.
\end{defn}

The corresponding first Borel class of a rank $2$ symplectic bundle is therefore
\[
b_{1}^{\BO}(E,\phi) = -[K(E) ,\varTheta(E,\lambda_{\phi})] |_{X}
\in \BO^{4,2}(X).
\]
%
%
A short calculation shows that this is the class which corresponds to
$[E,\phi] - [\HH] \in GW^{-}(X)$ under the
isomorphism $\signtrans_{2}$.
The symplectic splitting principle \cite[Theorem 10.2]{Panin:2010fk} and Theorem \ref {T:p1.sum} now give
the next proposition.

\begin{prop}
\label{P:p1.BO}
Let $(E,\phi)$ be a symplectic bundle of rank $2r$ on $X$.  Then $b_{1}^{\BO}(E,\phi) \in \BO^{4,2}(X)$
is the class which corresponds to $[E,\phi]-r[\HH] \in GW^{-}(X)$ under the
isomorphism $\signtrans_{2}$.
%
\end{prop}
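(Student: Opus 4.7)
The plan is to bootstrap the rank-2 case---already stated just before the proposition via an explicit Koszul complex computation giving $p_{1}^{\BO}(L,\phi) = \signtrans_{2}([L,\phi] - [\HH])$ for rank-2 symplectic $(L,\phi)$---to arbitrary rank using the symplectic splitting principle together with the Cartan sum formula.

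Given a symplectic bundle $(E,\phi)$ of rank $2r$ on $X$, first apply the symplectic splitting principle \cite[Theorem 10.2]{Panin:2010fk} to produce a morphism $f \colon Y \to X$ in $\Sm/S$ such that
\[
f^{*}(E,\phi) \;\cong\; (L_{1},\phi_{1}) \oplus \cdots \oplus (L_{r},\phi_{r})
\]
is an orthogonal direct sum of rank-2 symplectic subbundles, and such that the induced map $f^{*} \colon \BO^{*,*}(X) \to \BO^{*,*}(Y)$ is injective. Both $p_{1}^{\BO}$ and $\signtrans_{2}$ are natural in the base scheme, so it will suffice to verify the desired identity after pullback along $f$.

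On $Y$, the Cartan sum formula (Theorem \ref{T:p1.sum}) gives
\[
f^{*} p_{1}^{\BO}(E,\phi) \;=\; p_{1}^{\BO}(f^{*}E, f^{*}\phi) \;=\; \sum_{i=1}^{r} p_{1}^{\BO}(L_{i}, \phi_{i}).
\]
Applying the rank-2 case summand by summand and using that $\signtrans_{2} \colon GW^{-}(Y) \to \BO^{4,2}(Y)$ is an additive isomorphism, one obtains
\[
\sum_{i=1}^{r} p_{1}^{\BO}(L_{i},\phi_{i}) \;=\; \sum_{i=1}^{r} \signtrans_{2}\bigl([L_{i},\phi_{i}] - [\HH]\bigr) \;=\; \signtrans_{2}\bigl([f^{*}E,f^{*}\phi] - r[\HH]\bigr).
\]
Naturality of $\signtrans_{2}$ identifies the right-hand side with $f^{*}\signtrans_{2}([E,\phi] - r[\HH])$, and injectivity of $f^{*}$ then yields the claimed equality on $X$.

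No real obstacle arises: the argument is a formal assembly of tools already set up in the excerpt. The only point that requires a moment's care is confirming that each ingredient is available for $\BO^{*,*}$ with its standard symplectic Thom structure (Definition \ref{D:BO.thom})---namely the splitting principle, the sum formula for $p_{1}$, and the naturality and additivity of $\signtrans_{2}$ as encoded in Definition \ref{D:proper.iso}.
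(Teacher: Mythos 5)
Your argument matches the paper's proof exactly: the paper disposes of the proposition in one line, observing that the rank-$2$ case (the short Koszul-complex calculation stated just before the proposition) together with the symplectic splitting principle \cite[Theorem 10.2]{Panin:2010fk} and the Cartan sum formula (Theorem~\ref{T:p1.sum}) give the result, and your proposal simply fills in the standard details of that assembly---pull back along a splitting morphism with injective $f^{*}$, apply additivity of $p_{1}$ and of $\signtrans_{2}$, and descend. There is no gap and no meaningful deviation from the paper's route.
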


Let $X = \bigsqcup X_{i}$ be the connected components of $X$.  We consider the elements
and functions
\begin{align}
\label{E:h}
1_{X_{i}} \in \BO^{0,0}(X),
&&
\rk_{X_{i}} \colon \BO^{4,2}(X) \to \ZZ,
&&
\hh \in \BO^{4,2}(\pt).
\end{align}
The first is the central idempotent which is the image of the unit
$1_{X_{i}} \in \BO^{0,0}(X_{i})$.
The second is the rank function on the Grothendieck-Witt group $KO_{0}^{[2]}(X)$
of bounded chain complexes of vector bundles.  The third is the class corresponding
to $[\HH] \in GW^{-}(\pt)$ under the
right
isomorphism $\signtrans_{2} \colon GW^{-}(\pt) \cong \BO^{4,2}(\pt)$.

Let $\widetilde b_{1}^{\BO} \colon \BO^{4,2}(X) \to \BO^{4,2}(X)$ be the map of
\eqref{E:p1.A}.

\begin{cor}
\label{C:p1}
For all $\alpha \in \BO^{4,2}(X)$ we have
$\alpha = \widetilde b_{1}^{\BO}(\alpha) + \hh \,\prod_{i} \frac 12 (\rk_{X_{i}}\alpha )1_{X_{i}}$.
\end{cor}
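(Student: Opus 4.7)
The plan is to reduce the identity to an equation in $GW^{-}(X)$ via the isomorphism $\signtrans_{2}$, apply Proposition~\ref{P:p1.BO} on each connected component of $X$, and then reassemble using the orthogonal idempotents $1_{X_{i}}$.

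First I would write $\alpha = \signtrans_{2}(\xi)$ for a unique $\xi \in GW^{-}(X)$ and present $\xi = [E,\phi] - [F,\psi]$ as a difference of classes of genuine symplectic bundles. On each connected component $X_{i}$ the ranks of $E$ and $F$ are constant, say $2r_{i}$ and $2s_{i}$. Using the compatibility of $\signtrans_{2}$ with the forgetful map to $K_{0}$ built into the sign in Definition~\ref{D:proper.iso}, the rank function on $\BO^{4,2}(X) = KO_{0}^{[2]}(X)$ satisfies $\rk_{X_{i}}(\alpha) = 2r_{i} - 2s_{i}$, so $\frac{1}{2}\rk_{X_{i}}(\alpha) = r_{i}-s_{i}$.

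Next I would apply Proposition~\ref{P:p1.BO} twice: $p_{1}^{\BO}(E,\phi)|_{X_{i}}$ is the $\signtrans_{2}$-image of $[E|_{X_{i}},\phi|_{X_{i}}] - r_{i}[\HH]$, and similarly for $(F,\psi)$. By additivity of $\widetilde p_{1}^{\BO}$ (which is the content of the theorem stated just before Definition~\ref{D:proper.iso}) and the definition of $\hh$ as $\signtrans_{2}([\HH])$, this yields
\[
\widetilde p_{1}^{\BO}(\alpha)|_{X_{i}} \;=\; \alpha|_{X_{i}} - (r_{i}-s_{i})\,\hh \;=\; \alpha|_{X_{i}} - \tfrac{1}{2}(\rk_{X_{i}}\alpha)\,\hh.
\]
Multiplying by $1_{X_{i}}$ and summing over $i$ -- which coincides with the expression in the statement since the $1_{X_{i}}$ form a complete orthogonal family of idempotents in $\BO^{0,0}(X)$ and $\hh$ lies in the (central) subring $\BO^{4,2}(\pt)$ -- rearranges to the desired global identity.

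The main obstacle is the bookkeeping around ranks: one must check that the natural rank on $KO_{0}^{[2]}(X)$, pulled back along $\signtrans_{2} \colon [E,\phi] \mapsto -[E[1],\phi[2]]$, returns $\rk E$, so that the sign in Definition~\ref{D:proper.iso} exactly cancels the $(-1)$ coming from the shift in degree~$1$. Once this is verified, additivity of $p_{1}$ (Theorem~\ref{T:p1.sum}) and Proposition~\ref{P:p1.BO} reduce the problem to the stated arithmetic identity $r_{i}-s_{i} = \tfrac{1}{2}\rk_{X_{i}}\alpha$.
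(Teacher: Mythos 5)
Your argument is correct, and it is essentially the expected derivation: the paper states Corollary~\ref{C:p1} without proof as an immediate consequence of Proposition~\ref{P:p1.BO} and the additivity of $p_{1}$ on $GW^{-}(X)$, so there is no alternative route in the text to compare against. Your bookkeeping on ranks is right: the shift $E \mapsto E[1]$ negates the Euler class in $K_{0}$ and the sign built into $\signtrans_{2}$ restores it, so $\rk_{X_{i}}(\signtrans_{2}([E,\phi]-[F,\psi])) = 2r_{i}-2s_{i}$, giving $r_{i}-s_{i} = \tfrac{1}{2}\rk_{X_{i}}\alpha$ as needed. One small remark for the record: the displayed formula in the statement contains a typo --- the operation over connected components must be a sum $\sum_{i}$, not a product $\prod_{i}$, since the $1_{X_{i}}$ are mutually orthogonal idempotents and the product would vanish whenever $X$ is disconnected; your proof correctly produces the sum.
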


\section{Symplectically oriented commutative ring {}{$T$}{T}-spectra}


Embed $\HH^{\oplus n} \subset\HH^{\oplus \infty}$ as the direct sum
of the first $n$ summands.
The ensuing filtration $\HH \subset \HH^{\oplus 2} \subset \HH^{\oplus 3} \subset \cdots$
for each $r$ a direct system of schemes
\[
\pt = HGr(r,r) \hra HGr(r,r+1) \hra HGr(r,r+2) \hra \cdots.
\]
The ind-scheme and motivic space
\[
BSp_{2r} = HGr(r,\infty) = \colim_{n \geq r} HGr(r,n)
\]
is pointed
by $h_{r} \colon \pt = HGr(r,r) \into BSp_{2r}$.   Each $HGr(r,n)$ has a tautological
symplectic subbundle $(\shf U_{HGr(r,n)},\phi_{HGr(r,n)})$, and their colimit is an ind-scheme
$\shf U_{BSp_{2r}}$ which is a vector bundle over the ind-scheme $BSp_{2r}$.
It has a Thom space $\Th \shf U_{BSp_{2r}}$ just like for ordinary schemes.  We write
\begin{equation*}
\label{E:MSp}
\MSp_{2r} = \Th \shf U_{BSp_{2r}} = \Th \shf U_{HGr(r,\infty)} =
\colim_{n \geq r} \Th \shf U_{HGr(r,n)}.
\end{equation*}
We refer the reader to \cite[\S 6]{Panin:2010ab} for the complete construction of $\MSp$
as a commutative monoid in the category of symmetric $T^{\wedge 2}$-spectra.  The unit comes
from the pointings $h_{r}\colon \pt \into BSp_{2r}$, which induce canonical inclusions of
Thom spaces
\[
e_{r} \colon
T^{\wedge 2r} \into \MSp_{2r}.
\]

Let $(A,\mu,e)$ be a commutative ring $T$-spectrum. The unit
of the monoid defines the unit element
$1_{A}\in A^{0,0}(\pt_+)$.
Applying the
$T$-suspension
isomorphism twice gives an element
$\Sigma^2_{T}1_{A} \in A^{4,2}(T^{\wedge 2}) = A^{4,2}(\Th \Aff^{2})$.

\begin{defn}
A \emph{symplectic Thom orientation} on a commutative ring $T$-spectrum $(A,\mu,e)$ is
an element $\thom \in A^{4,2}(\MSp_{2}) = A^{4,2}(\Th \shf U_{HP^{\infty}})$ with
$\thom |_{T^{\wedge 2}} =  \Sigma_{T}^{2}1_{A} \in A^{4,2}(T^{\wedge 2})$.
\end{defn}

The element $\thom$ should be regarded as the symplectic Thom class of the tautological
quaternionic line bundle
$\shf U_{HP^{\infty}}$
over $HP^{\infty}$.

\begin{example}
\label{Ex:thom.MSp}
The \emph{standard symplectic Thom orientation} on algebraic symplectic cobordism is the element
$\thom^{\MSp} = u_{2} \in \MSp^{4,2}(\MSp_{2})$ corresponding to the canonical map
$u_{2} \colon \Sigma_{T}^{\infty}\MSp_{2}(-2) \to \MSp$ described in \eqref{E:canonical.map}.
\end{example}

The main theorem of \cite{Panin:2010ab} gives seven other structures containing the same information
as a symplectic Thom orientation.  First:

\begin{thm}
[\protect{\cite[Theorem 10.2]{Panin:2010ab}}]
\label{T:a.alpha}
Let $(A,\mu,e)$ be a commutative monoid in $SH(S)$.  There is a canonical bijection between the sets
of

\parens{a} symplectic Thom structures on the ring cohomology theory
$A^{*,*}$ such that for the trivial rank $2$ symplectic bundle $\HH$ over $\pt$ we have
$\thom(\HH) = \Sigma_{T}^{2}1_{A}$ in $A^{4,2}(T^{\wedge 2})$, and

%
%

\parens{$\alpha$}
symplectic Thom orientations on $(A,\mu,e)$.

%

\end{thm}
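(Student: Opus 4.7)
The plan is to construct mutually inverse maps between (a) and ($\alpha$). The easier direction, (a) to ($\alpha$), sends a symplectic Thom structure to the element obtained by evaluating it on the universal examples: apply the Thom structure to the tautological rank $2$ symplectic subbundle $(\shf U_{HP^n}, \phi_{HP^n})$ on each finite-dimensional $HP^n$ to get Thom classes $\thom_n \in A^{4,2}(\Th \shf U_{HP^n})$, where compatibility under the inclusions $HP^n \hra HP^{n+1}$ is axiom (2) of Definition \ref{D:sp.thom}. Assemble these via the Milnor short exact sequence for $\MSp_2 = \colim_n \Th \shf U_{HP^n}$; the required vanishing of the $\lim^1$ term follows from Mittag-Leffler, because axiom (3) gives a Thom isomorphism $A^{*,*}(\Th \shf U_{HP^n}) \cong A^{*-4,*-2}(HP^n)$, and Theorem \ref{T:H.proj.bdl.1} then identifies this with $A^{*-4,*-2}(\pt)[t]/(t^{n+1})$, so the restriction maps in the tower are surjective. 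The normalization $\thom|_{T^{\wedge 2}} = \Sigma_T^2 1_A$ then follows because the basepoint $h_1 \colon T^{\wedge 2} = \Th \HH \hra \MSp_2$ corresponds to pulling back along $\pt = HP^0 \hra HP^\infty$, where the tautological bundle restricts to $\HH$.

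For the reverse direction ($\alpha$) to (a), given $\thom \in A^{4,2}(\MSp_2)$ and any rank $2$ symplectic bundle $(E,\phi)$ on $X \in \Sm/S$, choose a classifying map $f \colon X \to HP^N$ (for $N$ large) with $(f^*\shf U_{HP^N}, f^*\phi_{HP^N}) \cong (E,\phi)$; this exists because an ample family of line bundles on $X$ permits a symplectic embedding of $(E,\phi)$ as a direct summand of $\HH_X^{\oplus (N+1)}$. Define
\[
\thom(E,\phi) = \Th(f)^*\bigl( \thom|_{\Th \shf U_{HP^N}}\bigr) \in A^{4,2}(\Th E),
\]
where $\thom|_{\Th \shf U_{HP^N}}$ is the image under the restriction $A^{4,2}(\MSp_2) \to A^{4,2}(\Th \shf U_{HP^N})$. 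Verifying that this is independent of $f$ and $N$ requires an $\Aff^1$-homotopy argument: any two classifying maps for isomorphic symplectic bundles agree in $H_\bullet(S)$. Axioms (1) and (2) of Definition \ref{D:sp.thom} are then immediate from functoriality of $\Th$ and pullback; the required equality $\thom(\HH) = \Sigma_T^2 1_A$ reduces to the hypothesis $\thom|_{T^{\wedge 2}} = \Sigma_T^2 1_A$ by choosing the classifying map of $\HH$ to factor through $h_1 \colon \pt \hra HP^\infty$, and the Thom isomorphism in axiom (3) then follows by base change from the known isomorphism on $T^{\wedge 2}$.

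To close the loop, one checks that (a) $\to$ ($\alpha$) $\to$ (a) recovers the original Thom structure by naturality, since in the constructed theory the Thom class of $(E,\phi)$ is by definition the pullback of the Thom class of the tautological bundle along any classifying map; and that ($\alpha$) $\to$ (a) $\to$ ($\alpha$) recovers the original orientation because the tautological bundles on the $HP^n$ are classified by the identity, so their Thom classes reassemble into the element of $A^{4,2}(\MSp_2)$ we began with. The main obstacle will be the motivic classifying-space theory for rank $2$ symplectic bundles: establishing that every such $(E,\phi)$ on $X \in \Sm/S$ admits a classifying map to some finite $HP^N$ and that any two such are $\Aff^1$-homotopic. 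Once this input is available (as developed in \cite{Panin:2010ab}), the remainder consists of the Milnor/Mittag-Leffler argument driven by the quaternionic projective bundle theorem together with formal functoriality checks, with the normalization condition acting as the pin that makes the correspondence land correctly on both sides.
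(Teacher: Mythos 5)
The paper itself does not prove this theorem; it cites it from \cite[Theorem 10.2]{Panin:2010ab} (and the surrounding text explains that the source theorem actually identifies eight equivalent pieces of data, of which only \parens{a} and \parens{$\alpha$} are quoted here). So there is no in-paper proof to compare against, and your proposal is a reconstruction.

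Your overall plan --- build mutually inverse maps and check both composites --- is the right shape, and the direction \parens{a}~$\to$~\parens{$\alpha$} is essentially sound: the Milnor exact sequence together with the Mittag-Leffler condition (surjectivity of the restriction maps in the tower, supplied by Theorem~\ref{T:H.proj.bdl.1}) produces a well-defined element of $A^{4,2}(\MSp_2)$, and the normalization is recovered by pulling back along $\pt = HP^0 \hra HP^\infty$.

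The genuine gap is in \parens{$\alpha$}~$\to$~\parens{a}. Defining $\thom(E,\phi)$ as the pullback of $\thom$ along a classifying map $f\colon X \to HP^N$ requires independence of both $f$ and $N$. You flag this yourself as ``the main obstacle,'' but the lemma you defer to --- that any two classifying maps for isomorphic rank-$2$ symplectic bundles are $\Aff^1$-homotopic --- is not something you can take for granted. For a fixed finite $HP^N$ it is simply not true; one can only hope for it after stabilizing through the inclusions $HP^N \hra HP^{N'}$, and even then it is a substantive rigidity statement about the motivic classifying space $BSp_2 = HP^\infty$, not a formal consequence of the ample-family hypothesis. It is also not clear that this rigidity statement, in the form you need it, is actually supplied by the reference you point to. Indeed, the standard route in that reference avoids it: from a Thom orientation one extracts Pontryagin classes on finite quaternionic Grassmannians via the projective bundle theorem, and then builds the Thom class of an arbitrary rank-$2$ symplectic bundle directly from the Pontryagin/Thom-isomorphism calculus (this is precisely why the source theorem parades through several intermediate structures rather than jumping straight from \parens{$\alpha$} to \parens{a}). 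So the parts of your argument that are formal --- the Milnor-sequence assembly, the normalization, the mutual-inverse checks --- would go through, but the core construction in \parens{$\alpha$}~$\to$~\parens{a} hinges on an unproved homotopy-uniqueness claim and is not the construction used in the cited proof.
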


Thus a symplectic Thom orientation determines Thom and Borel classes for all symplectic bundles.


\begin{lem}
In the standard special linear and symplectic Thom structures on $\BO$ we have
$\thom(\Aff^{1},1) = \Sigma_{T}1_{\BO}$ and $\thom(\HH) =
\Sigma_{T}^{2}1_{\BO}$.
\end{lem}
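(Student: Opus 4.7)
The plan is to establish the line bundle identity directly from the construction of the $T$-spectrum $\BO$, then bootstrap to the symplectic case by multiplicativity of the standard Koszul Thom class under direct sums.

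For $\thom(\Aff^{1},1) = \Sigma_{T}1_{\BO}$, I would appeal to the construction of $(KO^{[0]}, KO^{[1]}, \dots)$ recalled just before Definition \ref{D:proper.iso}. Its structural map is adjoint to the dévissage equivalence $KO^{[n]}(X) \xra{\cong} KO^{[n+1]}(X \times \Aff^{1}, X \times (\Aff^{1} - 0))$, which in Schlichting's construction is given on symmetric complexes by external product with the Koszul resolution $K(\Aff^{1})$ of $z_{*}\OO$ equipped with its canonical symmetric isomorphism $\varTheta(\Aff^{1}, 1)$. Consequently the $T$-suspension isomorphism $\BO^{0,0}(\pt_{+}) \xra{\cong} \BO^{2,1}(T)$ coincides with cup product by $[K(\Aff^{1}), \varTheta(\Aff^{1},1)]$, which by Definition \ref{D:BO.thom} is precisely $\thom^{\BO}(\Aff^{1},1)$. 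Applying this to the class $1_{\BO}$ yields the first identity.

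For $\thom(\HH) = \Sigma_{T}^{2}1_{\BO}$, I would first unwind the Pfaffian. With $\HH = \bigl( \OO^{\oplus 2}, \bigl( \begin{smallmatrix} 0 & 1 \\ -1 & 0 \end{smallmatrix} \bigr) \bigr)$ the symplectic form is $\phi = e_{1}^{*} \wedge e_{2}^{*}$, which is the standard generator of $\det (\OO^{\oplus 2})^{\vee}$, so $\Pf \phi = e_{1}^{*} \wedge e_{2}^{*}$ and $\lambda_{\phi} = (\Pf \phi)^{-1}$ is the standard trivialization of $\det \OO^{\oplus 2}$. Hence $\thom^{\BO}(\HH) = \thom^{\BO}(\OO^{\oplus 2}, 1)$ as a special linear Thom class. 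I would then invoke multiplicativity of the standard special linear Thom structure under direct sums: the Koszul complex of $E \oplus F$ is the external tensor product $K(E) \boxtimes K(F)$, and the symmetric isomorphisms $\varTheta(E,\lambda_{E})$ and $\varTheta(F,\lambda_{F})$ tensor together to give $\varTheta(E \oplus F, \lambda_{E} \wedge \lambda_{F})$. Decomposing $(\OO^{\oplus 2}, 1) = (\OO, 1) \oplus (\OO, 1)$ then gives $\thom^{\BO}(\HH) = \thom^{\BO}(\OO,1) \cup \thom^{\BO}(\OO,1) = \Sigma_{T} 1_{\BO} \cup \Sigma_{T} 1_{\BO} = \Sigma_{T}^{2} 1_{\BO}$ by the first identity.

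The main obstacle is careful bookkeeping of signs and duality shifts: one must verify that the dévissage equivalence is realized by external product with $[K(\Aff^{1}), \varTheta(\Aff^{1},1)]$ under the precise conventions of \cite{Panin:2010aa}, and that the tensor decomposition of Koszul symmetric isomorphisms carries no stray signs in the shifted-duality formalism of Schlichting. These are essentially formal consequences of the construction, but constitute the only non-trivial step in the argument.
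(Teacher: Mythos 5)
Your proposal is correct and takes essentially the same approach as the paper: both identify the $T$-suspension with cup product by $[K(\OO),\varTheta(\OO,1)] = \thom(\Aff^{1},1)$ via the construction of the structural maps, and both obtain $\thom(\HH) = \thom(\Aff^{1},1)^{\cup 2}$. Your write-up is somewhat more detailed in the second step (explicitly checking that $\Pf\phi$ gives the standard trivialization so that $\thom^{\BO}(\HH) = \thom^{\BO}(\OO^{\oplus 2},1)$, and spelling out the Koszul multiplicativity), where the paper compresses this to a single ``It then follows.''
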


\begin{proof}
The structural maps
$\KO^{[n]}\wedge T \to \KO^{[n+1]}$ of the spectrum are by definition \cite[\S 8]{Panin:2010aa}
adjoint to maps
$\KO^{[n]} \to \operatorname{\mathbf{Hom}}_{\bullet}(T,\KO^{[n+1]})$ which are
fibrant replacements of maps of simplicial presheaves
\[
({-}\boxtimes (K(\OO),\varTheta(\OO,1)))_{*} \colon
KO^{[n]}({-}) \to KO^{[n+1]}({-} \wedge T)
\]
which act on the homotopy groups as ${-}\cup [K(\OO),\varTheta(\OO,1)] = {-} \cup \thom(\Aff^{1},1)$.
So we have $\Sigma_{T}1_{\BO} = \thom(\Aff^{1},1)$.
It then follows that we have $\thom(\HH) = \thom(\Aff^{1},1)^{\cup 2} = \Sigma_{T}^{2}1_{\BO}$.
\end{proof}

The standard symplectic Thom structure on $\BO$ thus satisfies the normalization condition of
Theorem \ref{T:a.alpha}.  It corresponds to the
%
\emph{standard symplectic Thom orientation} on hermitian $K$-theory
$\thom^{\BO} \in \BO^{4,2}(\MSp_{2})$.
It is given by the formulas of Definition \ref{D:BO.thom} for
$(E,\phi) = (\shf U_{HP^{\infty}},\phi_{HP^{\infty}})$ tautological subbundle on $HP^{\infty} = BSp_{2}$.

A \emph{symplectically oriented commutative $T$-ring spectrum} is a pair $(A,\vartheta)$
with $A$ a commutative monoid in $SH(S)$ and $\vartheta$ a symplectic
Thom orientation on $A$.  We could write the associated Thom and Borel classes as
$\thom^{\vartheta}(E,\phi)$ and $b_{i}^{\vartheta}(E,\phi)$.

A \emph{morphism of symplectically oriented commutative $T$-ring spectra}
$\varphi \colon (A,\vartheta) \to (B,\varpi)$ is a morphism of commutative monoids with
$\varphi(\vartheta) = \varpi$.
For such a $\varphi$ one has $\varphi(\thom^{\vartheta}(E,\phi)) = \thom^{\varpi}(E,\phi)$
and $\varphi(b_{i}^{\vartheta}(E,\phi)) = b_{i}^{\varpi}(E,\phi)$ for all symplectic bundles.

\begin{thm}
[Universality of $\MSp$]
\label{T:main.Sp}
Let $(A,\mu,e)$ be a commutative monoid in $SH(S)$.
The assignments $\varphi \mapsto \varphi(\thom^{\MSp})$ gives a bijection between the sets of

\parens{$\varepsilon$} morphisms $\varphi \colon (\MSp,\mu_{\MSp},e_{\MSp}) \to (A,\mu,e)$
of commutative monoids in $SH(S)$, and

\parens{$\alpha$}
symplectic Thom orientations on $(A,\mu,e)$.
\end{thm}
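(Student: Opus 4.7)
The plan is to deduce this universality of $\MSp$ essentially from the main theorem of \cite{Panin:2010ab} (which establishes the equivalence between eight different structures carrying the same data), combined with Theorem \ref{T:a.alpha} above. The assignment $\varphi \mapsto \varphi(\thom^{\MSp})$ is well defined as a map into ($\alpha$): since $\thom^{\MSp} = u_{2}$ has restriction $u_{2}|_{T^{\wedge 2}} = \Sigma_{T}^{2}1_{\MSp}$ by construction of the unit $e_{\MSp}$ from the pointings $e_{r}\colon T^{\wedge 2r} \hra \MSp_{2r}$, and any morphism of commutative monoids in $SH(S)$ preserves the unit, we have $\varphi(\thom^{\MSp})|_{T^{\wedge 2}} = \Sigma_{T}^{2}1_{A}$.

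For surjectivity, given a symplectic Thom orientation $\vartheta$ on $(A,\mu,e)$, first invoke Theorem \ref{T:a.alpha} to associate with $\vartheta$ a normalized symplectic Thom structure $(E,\phi) \mapsto \thom^{\vartheta}(E,\phi)$. I would then apply this Thom structure to the tautological symplectic bundle $(\shf U_{HGr(r,n)},\phi_{HGr(r,n)})$ for each $n \geq r$. The naturality axiom of the Thom structure ensures these classes are compatible under the inclusions $HGr(r,n) \hra HGr(r,n+1)$, so they assemble into an element of $A^{4r,2r}(\MSp_{2r})$, i.e., a morphism $\varphi_{r}\colon \Sigma_{T}^{\infty}\MSp_{2r}(-2r) \to A$ in $SH(S)$. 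Compatibility with the structural maps of the symmetric $T^{\wedge 2}$-spectrum $\MSp$ comes from the identity $\thom^{\vartheta}\bigl( (E,\phi)\oplus\HH \bigr) = \thom^{\vartheta}(E,\phi) \cup \thom^{\vartheta}(\HH) = \Sigma_{T}^{2}\thom^{\vartheta}(E,\phi)$, where the first equality is multiplicativity of Thom classes (standard consequence of the symplectic Thom structure axioms) and the second is the normalization $\thom^{\vartheta}(\HH) = \Sigma_{T}^{2}1_{A}$. Multiplicativity $\varphi \circ \mu_{\MSp} = \mu_{A} \circ (\varphi \wedge \varphi)$ follows analogously from the fact that the product pairings $\MSp_{2r} \wedge \MSp_{2s} \to \MSp_{2(r+s)}$ are induced by direct-sum maps $HGr(r,n) \times HGr(s,m) \to HGr(r+s,n+m)$ on tautological bundles, paired with the general identity $\thom^{\vartheta}((E,\phi) \oplus (F,\psi)) = \thom^{\vartheta}(E,\phi) \cup \thom^{\vartheta}(F,\psi)$.

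For injectivity, if $\varphi_{1},\varphi_{2}\colon \MSp \to A$ agree on $\thom^{\MSp} \in \MSp^{4,2}(\MSp_{2})$, then by Theorem \ref{T:a.alpha} they induce the same Thom structure, hence agree on $\thom^{\vartheta}(\shf U_{HGr(r,n)},\phi_{HGr(r,n)})$ for every $r,n$, which means $\varphi_{1}\circ u_{r} = \varphi_{2} \circ u_{r}$ for all $r$. Since the canonical maps $u_{r}\colon \Sigma_{T}^{\infty}\MSp_{2r}(-2r) \to \MSp$ are collectively the defining diagram exhibiting $\MSp$ as the colimit/presentation of the symmetric $T^{\wedge 2}$-spectrum, this forces $\varphi_{1} = \varphi_{2}$ in $SH(S)$.

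The main obstacle is the bookkeeping in the surjectivity step: verifying that the classes $\thom^{\vartheta}(\shf U_{HGr(r,n)},\phi_{HGr(r,n)})$, viewed as maps out of $\Sigma_{T}^{\infty}\MSp_{2r}(-2r)$, assemble coherently both with the structural maps of the symmetric $T^{\wedge 2}$-spectrum and with the commutative monoid structure. This is precisely the content packaged into the main theorem of \cite{Panin:2010ab}, so in the write-up I would invoke that result directly rather than reproduce the verification.
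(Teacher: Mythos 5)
Your plan concludes by invoking the main theorem of \cite{Panin:2010ab} directly, and that is exactly what the paper does: its entire proof is the citation ``This is \cite[Theorems~12.3, 13.2]{Panin:2010ab}.'' Your intermediate sketch is a reasonable gloss on what that theorem encodes, though two of its steps are glossed too quickly to stand on their own --- the injectivity argument needs to address the $\lim^1$ obstruction in recovering a map out of $\MSp$ from its restrictions along the $u_r$, and the surjectivity step quietly passes from a Thom structure on rank-$2$ symplectic bundles (the content of Theorem~\ref{T:a.alpha}) to Thom classes of the higher-rank bundles $\shf U_{HGr(r,n)}$, which requires the multiplicativity built into the full chain of equivalences of \cite{Panin:2010ab} rather than the axioms of Definition~\ref{D:sp.thom} alone; but since you defer to the cited theorem for the verification, this matches the paper.
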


This is \cite[Theorems 12.3, 13.2]{Panin:2010ab}.
Thus $(\MSp,\thom_{\MSp})$ is the universal symplectically oriented commutative $T$-ring spectrum.

Let $\varphi \colon (A,\vartheta) \to (B,\varpi)$ be
a morphism of symplectically oriented commutative $T$-ring spectra.
For a space $X$ the isomorphisms $X \wedge \pt_{+} \cong X \cong \pt_{+} \wedge X$
make $A^{*,*}(X)$ into a two-sided module over the ring $A^{*,*}(\pt)$ and into a
bigraded-commutative algebra over the commutative ring $A^{4*,2*}(\pt)$.  The morphism $\varphi$
induces morphisms of graded rings
\begin{equation}
\label{E:CF.hom}
\begin{gathered}
\bar \varphi
_{X}\colon A^{\ast,\ast}(X) \otimes_{A^{4*,2*}(\pt)}
B^{4*,2*}(\pt) \to
B^{\ast,\ast}(X)
\\
\bar \varphi
_{X}\colon A^{4*,2*}(X) \otimes_{A^{4*,2*}(\pt)}
B^{4*,2*}(\pt) \to
B^{4*,2*}(X)
\end{gathered}
\end{equation}
which are natural in $X$, with the pullbacks acting on the left side of the $\otimes$.

\begin{thm}
[Weak quaternionic cellularity of $\MSp_{2r}$]
\label{T:cellularity}
Let $\varphi \colon (A,\vartheta) \to (B,\varpi)$ be a morphism of symplectically oriented
commutative $T$-ring spectra.  Then for all $r$ the natural morphism of graded rings
\[
\bar \varphi
_{\MSp_{2r}} \colon
A^{4*,2*}(\MSp_{2r}) \otimes _{A^{4*,2*}(\pt)} B^{4*,2*}(\pt) \to B^{4*,2*}(\MSp_{2r})
\]
is an isomorphism.
\end{thm}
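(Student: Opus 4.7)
The plan is to prove the statement on each finite approximation $\Th \shf U_{HGr(r,n)}$ of $\MSp_{2r}$ and then pass to the filtered colimit. The key structural input is that, for each $n \geq r$, the cohomology $A^{*,*}(\Th \shf U_{HGr(r,n)})$ is a free $A^{*,*}(\pt)$-module with a basis of the form $\{\thom^{\vartheta}(\shf U_{HGr(r,n)}) \cup m_{I}(p_{1},\dots,p_{r})\}_{I \in \mathcal{J}(r,n)}$, where the $p_{i} = p_{i}^{\vartheta}(\shf U_{HGr(r,n)})$ are the Pontryagin classes of the tautological symplectic subbundle, $\thom^{\vartheta}(\shf U_{HGr(r,n)})$ is its Thom class provided by Theorem~\ref{T:a.alpha} applied to $\vartheta$, and the $m_{I}$ are universal monomial expressions indexed by a finite combinatorial set $\mathcal{J}(r,n)$ independent of $A$. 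This freeness is obtained by combining the Thom isomorphism ${-}\cup \thom^{\vartheta}(\shf U_{HGr(r,n)}) \colon A^{*,*}(HGr(r,n)) \xra{\cong} A^{*,*}(\Th \shf U_{HGr(r,n)})$ with iterated applications of the quaternionic projective bundle theorem (Theorem~\ref{T:H.proj.bdl.2}) along a tower of iterated quaternionic projective bundles that resolves $HGr(r,n)$, i.e.\ the symplectic splitting principle. Since $\thom^{\vartheta}(\shf U_{HGr(r,n)})$ lies in bidegree $(4r,2r)$ and each $p_{i}$ in bidegree $(4i,2i)$, every basis element lies in some $(4j,2j)$, so restricting to the sub-bigrading gives that $A^{4*,2*}(\Th \shf U_{HGr(r,n)})$ is a free $A^{4*,2*}(\pt)$-module on the same basis.

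Next I would pass to the colimit. The closed immersions $HGr(r,n) \hra HGr(r,n+1)$ restrict tautological subbundle to tautological subbundle, so on cohomology the induced restriction maps send basis elements to basis elements or to zero, hence are surjective. The Milnor $\lim^{1}$-term for the ind-scheme $\MSp_{2r} = \colim_{n} \Th \shf U_{HGr(r,n)}$ therefore vanishes, giving $A^{*,*}(\MSp_{2r}) \cong \lim_{n} A^{*,*}(\Th \shf U_{HGr(r,n)})$, and similarly after restricting to $(4*,2*)$.

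Now the universality of the basis kicks in. Since $\varphi \colon (A,\vartheta) \to (B,\varpi)$ is a morphism of symplectically oriented commutative $T$-ring spectra, one has $\varphi(\thom^{\vartheta}(\shf U_{HGr(r,n)})) = \thom^{\varpi}(\shf U_{HGr(r,n)})$ and $\varphi(p_{i}^{\vartheta}(\shf U_{HGr(r,n)})) = p_{i}^{\varpi}(\shf U_{HGr(r,n)})$. Consequently $\bar\varphi_{\Th \shf U_{HGr(r,n)}}$ sends the $A^{4*,2*}(\pt)$-basis for $A^{4*,2*}(\Th \shf U_{HGr(r,n)})$ (after base change to $B^{4*,2*}(\pt)$) bijectively to the corresponding $B^{4*,2*}(\pt)$-basis for $B^{4*,2*}(\Th \shf U_{HGr(r,n)})$. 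Thus at each finite level $\bar\varphi$ is an isomorphism of free modules, and passing to $\lim_{n}$ (using that within each fixed bidegree $(4j,2j)$ only finitely many basis elements contribute, so base change commutes with the inverse limit in the relevant sense) yields the desired isomorphism for $\MSp_{2r}$.

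The main obstacle is establishing the freeness claim in Step~1 with a basis that is visibly functorial under morphisms of symplectically oriented ring spectra. The splitting-principle argument must be set up so that the intermediate free-module decompositions along the iterated quaternionic projective bundle tower are given by formulas in Pontryagin classes of tautological rank-$2$ symplectic sub-quotients, which then combine into universal polynomial expressions $m_{I}(p_{1},\dots,p_{r})$ in the Pontryagin classes of $\shf U_{HGr(r,n)}$; only with such a universal description does $\varphi$ automatically carry the basis on the $A$-side to the basis on the $B$-side. A secondary subtlety is the bigrading bookkeeping: one must check that the Thom isomorphism and the projective bundle decomposition both preserve the subgrading $(4*,2*)$, which they do because all Thom and Pontryagin classes have bidegrees of the form $(4i,2i)$.
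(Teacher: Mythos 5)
Your proposal follows essentially the same route as the paper: identify $A^{4*,2*}(\MSp_{2r})$ with a standard module over $A^{4*,2*}(\pt)$ generated by a Thom class times monomials in the Pontryagin classes of the tautological bundle, observe that $\varphi$ carries the $\vartheta$-Thom class and $\vartheta$-Pontryagin classes to the $\varpi$-ones, and conclude that $\bar\varphi$ is an isomorphism by base change. The only real difference is that the paper does not pass through the finite levels $\Th\shf U_{HGr(r,n)}$ and a Mittag--Leffler argument at all: it cites \cite[Theorems 9.1, 9.2, 9.3]{Panin:2010ab} directly for the identification of $A^{*,*}(BSp_{2r})$ with the ring of homogeneous formal power series $A^{*,*}(\pt)[[t_{1},\dots,t_{r}]]^{\homog}$ (via $t_{i}\mapsto p_{i}^{\vartheta}$) and of $A^{*+4r,*+2r}(\MSp_{2r})$ with the ideal $t_{r}A^{*,*}(\pt)[[t_{1},\dots,t_{r}]]^{\homog}$ (via the Thom isomorphism), so the diagram is already set up at the level of the ind-scheme and the argument reduces immediately to the observation that the base-change map $t_{r}A^{4*,2*}(\pt)[[t_{1},\dots,t_{r}]]^{\homog}\otimes_{A^{4*,2*}(\pt)}B^{4*,2*}(\pt)\to t_{r}B^{4*,2*}(\pt)[[t_{1},\dots,t_{r}]]^{\homog}$ is an isomorphism. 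Your re-derivation via the quaternionic projective bundle tower and the symplectic splitting principle is of course where those cited theorems ultimately come from, so this is a matter of how much to unfold rather than a genuinely different method.

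One caution on the way you phrase the colimit step. You justify commuting $\otimes_{A^{4*,2*}(\pt)}B^{4*,2*}(\pt)$ past $\lim_{n}$ by asserting that in a fixed bidegree $(4j,2j)$ only finitely many basis monomials contribute. That is true at each finite level, but after taking the inverse limit the cohomology of $\MSp_{2r}$ in a fixed bidegree involves monomials of all degrees $d\geq r$ with coefficients in $A^{4(j-d),2(j-d)}(\pt)$; for a periodic theory like $\BO$ these groups do not vanish for $j-d\to -\infty$, so there genuinely are infinitely many contributions and $A^{4*,2*}(\MSp_{2r})$ is a homogeneous power series module, not a free module of finite type. This is precisely why the paper records the answer as $[[t_{1},\dots,t_{r}]]^{\homog}$ rather than as a polynomial ring. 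The base-change step still goes through, but the justification has to engage with the power-series structure (as in the paper's final line) rather than rest on a finiteness claim that does not hold. I'd tighten that sentence accordingly; the rest of the argument is fine.
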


\begin{proof}
Let $t_{1},\dots,t_{r}$ be independent
indeterminates with $t_{i}$ of bidegree $(4i,2i)$.
By \cite[Theorems 9.1, 9.2, 9.3]{Panin:2010ab} there is a commutative diagram of isomorphisms
\[
\xymatrix @M=5pt @C=100pt {
A^{*,*}(\pt)[[t_{1},\dots,t_{r}]]^{\homog}
\ar[r]^-{t_{i} \mapsto b_{i}^{\vartheta}(\shf U_{BSp_{2r}},\phi_{BSp_{2r}})} _-{\cong}
\ar[d]_-{\times t_{r}} ^-{\cong}
&
A^{*,*}(BSp_{2r}) \ar[d] ^-{\cup \thom^{\vartheta} (\shf U_{BSp_{2r}},\phi_{BSp_{2r}})} _-{\cong}
\\
t_{r}A^{*,*}(\pt)[[t_{1},\dots,t_{r}]]^{\homog}
\ar[r]
_-{\cong}
&
A^{*+4r,*+2r}(\MSp_{2r})
}
\]
The notation on the left refers to homogeneous formal power series.  There is a
similar diagram for $(B, \varpi)$.  The maps $\varphi \colon A^{*,*} \to B^{*,*}$
commute with the maps of the two diagrams because $\varphi$ sends the
Thom and Borel classes of $(A,\vartheta)$ onto the
Thom and Borel classes of $(B, \varpi)$.
The morphism $\bar \varphi^{4*,2*}_{\MSp_{2r}}$ is an isomorphism because
\[
t_{r}A^{4*,2*}(\pt)[[t_{1},\dots,t_{r}]]^{\homog}
\otimes _{A^{4*,2*}(\pt)} B^{4*,2*}(\pt) \to
t_{r}B^{4*,2*}(\pt)[[t_{1},\dots,t_{r}]]^{\homog}
\]
is an isomorphism.
\end{proof}


\section{Where the class {}{$b_{1}$}{p\_1} takes the place of honour}
\label{AgeneralResult}
We suppose that
$(\thy{U},\thy{u}) \to (\BO,\thom^{\BO})$
is a morphism of symplectically oriented commutative ring
$T$-spectra.
We set
\begin{gather*}
\bar {\thy{U}}^{\ast,\ast}(X)=
\thy{U}^{\ast,\ast}(X) \otimes_{\thy{U}^{4*,2*}(\pt)} \BO^{4*,2*}(\pt),
\\
\bar {\thy{U}}^{4*,2*}(X)=
\thy{U}^{4*,2*}(X) \otimes_{\thy{U}^{4*,2*}(\pt)} \BO^{4*,2*}(\pt),
\end{gather*}
and we write
$\bar \varphi_X$
for the morphisms of \eqref {E:CF.hom}.
%
%

\begin{thm}
\label{AlmostlyMain}
Let
$(\thy{U},\thy{u}) \to (\BO,\thom^{\BO})$
be a morphism of symplectically oriented commutative ring
$T$-spectra.
Suppose there exists an $N$ such that for all $n \geq N$ the maps
$\bar\varphi_{\thy{U}_{2n}} \colon \bar{\thy{U}}^{4i,2i}(\thy{U}_{2n}) \to
\BO^ {4i,2i} (\thy{U}_{2n})$
are isomorphisms for all $i$.  Then
for all small pointed motivic spaces $X$ and all $(p,q)$
the homomorphism
$\bar \varphi_{X} \colon \bar{\thy{U}}^{p,q}(X) \to \BO^{p,q}(X)$
is an isomorphism.
\end{thm}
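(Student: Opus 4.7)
The overall strategy is in two stages: reduce to bidegrees of the form $(4i,2i)$ by motivic suspension, then propagate the isomorphism from the hypothesis on $\thy{U}_{2n}$ ($n\geq N$) to all small pointed motivic spaces via spectrum assembly.

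\emph{Stage 1 (suspension reduction).} The cohomology theories $\thy{U}^{*,*}$, $\BO^{*,*}$, and $\bar{\thy{U}}^{*,*}$ admit suspension isomorphisms $A^{p,q}(X)\cong A^{p+1,q}(X\wedge S^1)$ and $A^{p,q}(X)\cong A^{p+1,q+1}(X\wedge\GG_m)$, and $\bar\varphi$ commutes with them by naturality. The shift vectors $(1,0)$ and $(1,1)$ suffice to reach any bidegree $(4i,2i)$ from any $(p,q)$ with non-negative coefficients, provided $i$ is taken sufficiently large: set $b=\max(0,p-2q)$ and $a=2q+b-p$ (with a parity adjustment to make $q+b$ even), then $(p+a+b,q+b)=(4i,2i)$. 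Smashing with the compact spheres $S^1$ and $\GG_m$ preserves smallness of $X$, so the theorem reduces to proving $\bar\varphi_X$ is an iso at every bidegree $(4i,2i)$ for every small pointed motivic space $X$.

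\emph{Stage 2 (propagation).} Let $\mathcal{C}$ denote the class of small pointed motivic spaces $X$ on which $\bar\varphi_X$ is an iso in every bidegree $(4i,2i)$. The hypothesis gives $\thy{U}_{2n}\in\mathcal{C}$ for $n\geq N$. The class $\mathcal{C}$ is closed under retracts and weak equivalences by naturality of $\bar\varphi$, under finite cofiber sequences by the $5$-lemma applied to the long exact sequences of cohomology (provided $\BO^{4*,2*}(\pt)$ is flat over $\thy{U}^{4*,2*}(\pt)$, otherwise a derived replacement is needed), and under filtered colimits by the smallness of $X$.

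To show $\mathcal{C}$ contains every small pointed motivic space, use the canonical assembly maps $u_n\colon\Sigma_T^\infty \thy{U}_{2n}(-2n)\to\thy{U}$ from~\eqref{E:canonical.map} and their $\BO$-analogues. For small $X$ these yield a presentation
\[
\thy{U}^{4i,2i}(X)\;\cong\;\colim_n\,\Hom_{SH(S)}(\Sigma_T^\infty X,\Sigma^{4i-4n,2i-2n}\Sigma_T^\infty \thy{U}_{2n}),
\]
and similarly for $\BO$, with $\bar\varphi$ induced levelwise by $\varphi_{2n}\colon\thy{U}_{2n}\to\BO_{2n}$. The levelwise iso from the hypothesis, assembled with a Milnor $\lim^1$ argument to control the colimit, would then produce the iso on $X$. \emph{The main obstacle} is precisely this final assembly: reconciling the tensor product $\otimes_{\thy{U}^{4*,2*}(\pt)}\BO^{4*,2*}(\pt)$ with the homotopy colimit (which requires flatness of $\BO^{4*,2*}(\pt)$ over $\thy{U}^{4*,2*}(\pt)$ or a derived tensor product) and controlling the $\lim^1$ term that can obstruct passage from the levelwise iso to an iso on the colimit. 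These are the technical points that make the theorem nontrivial beyond the hypothesis.
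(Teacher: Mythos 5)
Your Stage 1 (suspension reduction between bidegrees using $S^1$ and $\GG_m$ smash) matches the paper's Lemma~\ref{L:4} and is sound, although the paper runs it in the opposite direction: it proves the result first at bidegrees $(8i+4,4i+2)$ and then descends to all $(p,q)$. The problem is Stage~2.

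The assembly via $u_n$, the flatness of $\BO^{4*,2*}(\pt)$ over $\thy{U}^{4*,2*}(\pt)$, and the $\lim^1$ control that you flag as ``the main obstacle'' are not technical loose ends to be patched --- they are genuine obstructions, and the paper's proof avoids them entirely by a different mechanism. Nothing in the hypotheses gives flatness, so the five-lemma closure of your class $\mathcal{C}$ under cofiber sequences is unjustified, and without it the cellular/assembly strategy collapses. The key idea the paper uses, which is missing from your proposal, is that $\BO^{4,2}$ is represented in $H_\bullet(S)$ by the ind-Grassmannian $\ZZ\times HGr$ (Morel--Voevodsky-style representability of $\KO^{[2]}$), and the universal class $\tau_2 \in \BO^{4,2}(\ZZ\times HGr)$ decomposes, by Corollary~\ref{C:p1}, as $p_1^{\BO}+\tfrac12\rk^{\BO}\,\hh$. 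Because $\thy{U}$ is symplectically oriented, both pieces have explicit preimages in $\bar{\thy{U}}^{4,2}(\ZZ\times HGr)$ (namely $p_1^{\thy{U}}\otimes 1$ and $\tfrac12\rk^{\thy{U}}\otimes\hh$), which via Yoneda defines a \emph{functorial section} $s_X$ of $\bar\varphi_X$ in bidegree $(4,2)$, extended by $\beta_8$-periodicity to $(8i+4,4i+2)$ (Lemmas~\ref{L:1} and~\ref{L:2}). This gives surjectivity for free, with no flatness or $\lim^1$ input. Injectivity is then a separate argument (Lemma~\ref{L:3}): for small $X$, any class becomes, after enough $T$-suspensions, a pullback along some $f\colon X\wedge T^{\wedge 2n}\to\thy{U}_{2d+2n}$, and the hypothesis that $\bar\varphi_{\thy{U}_{2m}}$ is an isomorphism for $m\geq N$ forces the section to act as a two-sided inverse there, hence also on the pullback. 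So your colimit presentation of $\thy{U}^{4d,2d}(X)$ for small $X$ is the right tool, but it is used only to prove injectivity after surjectivity is already in hand, not to bootstrap an isomorphism from scratch. Without the section, your argument has no way to produce surjectivity, and the derived tensor product / $\lim^1$ machinery you gesture at would require extra hypotheses the theorem does not have.
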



Before turning to the theorem itself, we prove a series of lemmas.  The first three demonstrate
the significance of the first Borel class for this problem.

\begin{lem}
\label{L:1}
The functorial map $\bar\varphi_{X} \colon \bar{\thy{U}}^{4,2}(X) \to \BO^{4,2}(X)$ has a section
$s_{X}$ which is functorial in $X$.
\end{lem}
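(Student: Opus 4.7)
The plan is to use Corollary \ref{C:p1}, which for a smooth scheme $X = \bigsqcup_{i} X_{i}$ decomposes any $\alpha \in \BO^{4,2}(X)$ as
\[
\alpha = \widetilde p_{1}^{\BO}(\alpha) + \hh \cdot \prod_{i} \tfrac{1}{2}(\rk_{X_{i}}\alpha)\, 1_{X_{i}},
\]
and to lift this decomposition termwise to $\bar{\thy{U}}^{4,2}(X)$. For the first summand I use $\widetilde p_{1}^{\thy{U}} \colon \BO^{4,2}(X) \to \thy{U}^{4,2}(X)$ from \eqref{E:p1.A} applied to $\thy{U}$; because $\varphi$ is a morphism of symplectically oriented commutative ring $T$-spectra, it takes the $\thy{U}$-Pontryagin class of every symplectic bundle to the $\BO$-Pontryagin class, hence $\varphi \circ \widetilde p_{1}^{\thy{U}} = \widetilde p_{1}^{\BO}$. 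For the second summand, the identity $\thy{U}^{0,0}(X) = \prod_{i} \thy{U}^{0,0}(X_{i})$ gives canonical idempotents $1_{X_{i}}^{\thy{U}} \in \thy{U}^{0,0}(X)$ which $\varphi$ sends to $1_{X_{i}} \in \BO^{0,0}(X)$.

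I then define
\[
s_{X}(\alpha) \;:=\; \widetilde p_{1}^{\thy{U}}(\alpha) \otimes 1 \;+\; \Big(\prod_{i} \tfrac{1}{2}(\rk_{X_{i}}\alpha)\, 1_{X_{i}}^{\thy{U}}\Big) \otimes \hh \;\in\; \bar{\thy{U}}^{4,2}(X).
\]
Applying $\bar\varphi_{X}$ to this expression and using $\varphi(1_{X_{i}}^{\thy{U}}) = 1_{X_{i}}$, $\varphi \circ \widetilde p_{1}^{\thy{U}} = \widetilde p_{1}^{\BO}$, and $\bar\varphi_{X}(u \otimes b) = \varphi(u) \cdot b$, one recovers exactly the decomposition of $\alpha$ provided by Corollary \ref{C:p1}. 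Functoriality in $X$ is checked termwise: $\widetilde p_{1}^{\thy{U}}$ commutes with pullbacks by naturality of the Pontryagin class, and for any $f \colon Y \to X$ with $f$-image of a connected component $Y_{j}$ lying in $X_{i}$, the identities $f^{*}(1_{X_{i}}^{\thy{U}})|_{Y_{j}} = 1_{Y_{j}}^{\thy{U}}$ and $\rk_{Y_{j}}(f^{*}\alpha) = \rk_{X_{i}}(\alpha)$ immediately give $f^{*} s_{X}(\alpha) = s_{Y}(f^{*}\alpha)$.

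The one point requiring care is that Corollary \ref{C:p1}, the map $\widetilde p_{1}^{\thy{U}}$, and the idempotents $1_{X_{i}}^{\thy{U}}$ are originally defined only on $\Sm/S$, while the lemma should be read as asserting functoriality of $s_{X}$ on the category of motivic spaces. Since both $\bar\varphi_{X}$ and its would-be section are natural transformations between functors that factor through $H_{\bullet}(S)$ and $SH(S)$, and the constituent theories $\BO^{4,2}$, $\thy{U}^{4,2}$, $\thy{U}^{0,0}$ are representable, the scheme-level section $s$ extends canonically to a section on all motivic spaces. This representability/extension step is the main obstacle; once past it, the verification $\bar\varphi_{X} \circ s_{X} = \id$ reduces to the single scheme-level computation that Corollary \ref{C:p1} makes essentially tautological.
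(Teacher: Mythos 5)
Your decomposition of the section via Corollary~\ref{C:p1} --- first Pontryagin class plus rank term --- is exactly the decomposition the paper uses, and your scheme-level formula agrees with the restriction of the paper's universal element. But the step you flag at the end as "the main obstacle" is not a routine technicality to be handled by "representability"; it is the actual substance of the proof, and your argument for it does not go through as stated.

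The difficulty is this: the formula $s_{X}(\alpha) = \widetilde p_{1}^{\thy{U}}(\alpha) \otimes 1 + \bigl(\prod_{i} \tfrac12(\rk_{X_i}\alpha)\,1_{X_i}^{\thy{U}}\bigr)\otimes \hh$ only makes sense for $X$ a smooth scheme, because $\widetilde p_{1}^{\thy{U}}$ is defined via the identification $\BO^{4,2}(X)\cong GW^{-}(X)$ and the component idempotents $1_{X_i}^{\thy{U}}$. To get a natural transformation $\BO^{4,2}({-})\to\bar{\thy{U}}^{4,2}({-})$ on \emph{all} pointed motivic spaces by the Yoneda lemma, one must evaluate it on the object representing $\BO^{4,2}$ in $H_{\bullet}(S)$, namely $\KO^{[2]}\cong\ZZ\times HGr$, and produce a genuine element of $\bar{\thy{U}}^{4,2}(\ZZ\times HGr)$ mapping to the universal class $\tau_{2}$. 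Since $\ZZ\times HGr$ is an ind-scheme, not an object of $\Sm/S$, the scheme-level natural transformation cannot simply be applied to it; one would need a colimit argument across the exhaustion by $HGr(n,2n)$'s, and the target $\bar{\thy{U}}^{4,2}({-})=\thy{U}^{4,2}({-})\otimes_{\thy{U}^{4*,2*}(\pt)}\BO^{4*,2*}(\pt)$, being a tensor product rather than a representable functor, does not a priori turn such colimits of spaces into limits of groups without $\lim^{1}$ terms. The paper's proof circumvents this by working directly with $\ZZ\times HGr$: it uses the explicit computation $A^{*,*}(\ZZ\times HGr) = \bigl(A^{*,*}(\pt)[[p_{1},p_{2},\dots]]^{\homog}\bigr)^{\times\ZZ}$ valid for any symplectically oriented theory to write down concrete elements $p_{1}^{\thy{U}}$ and $\tfrac12\rk^{\thy{U}}$ in $\thy{U}^{4,2}(\ZZ\times HGr)$ and $\thy{U}^{0,0}(\ZZ\times HGr)$, sets $s = p_{1}^{\thy{U}}\otimes 1_{\BO} + \tfrac12\rk^{\thy{U}}\otimes\hh \in \bar{\thy{U}}^{4,2}(\ZZ\times HGr)$, checks $\bar\varphi(s)=\tau_{2}$, and only then invokes Yoneda. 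In other words, the "obstacle" is resolved by an explicit cohomology computation at the representing ind-scheme, not by abstract representability nonsense; your proposal identifies the right element but stops short of constructing it where it needs to live.
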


\begin{proof}

Write $HGr = \colim_{r} HGr(r,\infty)$.
According to Theorem \cite[Theorem 10.1, (11.1)]{Panin:2010aa}
there is an isomorphism \textit{\`a la} Morel-Voevodsky
$\bar\tau \colon (\ZZ \times HGr, (0,x_{0})) \cong \KSp$ in $H_{\bullet}(S)$
such that the restrictions are
\begin{equation*}
\bar\tau |_{\{i\} \times HGr(n,2n)}
= [\mathcal U_{HGr(n,2n)},\phi_{HGr(n,2n)}]+(i-n)[\HH]
\end{equation*}
in $KSp_{0}(HGr(n,2n)) = GW^{-}(HGr(n,2n))$.
Composing with the isomorphisms
in $H_{\bullet}(S)$
\[
(\ZZ \times HGr, (0,x_{0})) \xra{\bar\tau} \KSp \xrightarrow{\text{\textsf{trans}}_{1}}
\KO^{[2]}
\xrightarrow{-1} \KO^{[2]}.
\]
where the $\text{\textsf{trans}}_{1}$ comes from the translation functor
$(\mathcal F, \phi) \mapsto (\mathcal F[1], \phi[2])$, and the $-1$
is the inverse operation of the $H$-space structure. It gives us an element
\begin{equation*}
\tau_{2} \in \KO_{0}^{[2]}(\ZZ \times HGr, (0,x_{0}))
= \BO^{4,2}(\ZZ \times HGr, (0,x_{0}))
\end{equation*}
corresponding to the composition.  By Corollary \ref{C:p1} we have
\begin{equation*}
\tau_{2} |_{\{i\} \times HGr(n,2n)}
= b_{1}(\mathcal U_{HGr(n,2n)},\phi_{HGr(n,2n)})+i\hh.
\end{equation*}
For any symplectically oriented cohomology theory $A^{*,*}$ we have
\cite[(9.3)]{Panin:2010aa}
\begin{equation*}
A^{*,*}(\ZZ \times HGr) =
\bigl( A^{*,*}(\pt)[[b_{1},b_{2},b_{3},\dots]]^{\homog} \bigr)^{\times \ZZ}.
\end{equation*}
For such a theory let
\begin{align*}
\tfrac{1}{2}\rk^{A} & = (i 1_{HGr})_{i \in \ZZ}
\in A^{0,0}(\ZZ \times HGr),
&
b_{1}^{A} & = (b_{1})_{i \in \ZZ}\in A^{4,2}(\ZZ \times HGr)
\end{align*}
%
Then
\(
\tau_{2} = b_{1}^{\BO} + \tfrac 12 \rk^{\BO} \hh .
\)
Consider the element
\[
s = b^{\thy{U}}_1 \otimes 1_{\BO} + \tfrac{1}{2}\rk^{\thy{U}} \otimes \hh  \in
\bar {\thy{U}}^{4,2}(\ZZ \times HGr).
\]
Clearly one has $\bar\varphi(s) = \tau_{2}$.
The element $s$ may be regarded as a morphism of functors
$Hom_{H_{\bullet}(S)}(- , \ZZ \times HGr) \to \bar {\thy{U}}^{4,2}(-)$
by the Yoneda lemma.
The composite map
\[
Hom_{H_{\bullet}(S)}({-} , \ZZ \times HGr) \xra{s}
\bar {\thy{U}}^{4,2}(-)  \xra{\bar \varphi} \BO^{4,2}(-)
\]
coincides with a functor transformation given by the adjoint
$\Sigma_{T}^{\infty}(\ZZ \times HGr)(-2) \to \BO$
of the
motivic
weak equivalence
$\tau_2 \colon \ZZ \times HGr \to \KO^{[2]}$.
Thus for every pointed motivic space $X$ the map
\[
s_{X}\colon \BO^{4,2}(X)= Hom_{H_{\bullet}(S)}(X, \KO^{[2]})=
Hom_{H_{\bullet}(S)}(X, \ZZ \times HGr) \xra{s}   \bar {\thy{U}}^{4,2}(X) \]
is a section of the map
$\bar \varphi_{X}\colon \bar {\thy{U}}^{4,2}(X) \to \BO^{4,2}(X)$
which
is natural in $X$.
\end{proof}

\begin{lem}
\label{L:2}
For any integer $i$ the functorial map
$\bar\varphi_{X} \colon \bar{\thy{U}}^{8i+4,4i+2}(X) \to \BO^{8i+4,4i+2}(X)$ has a section
$t_{X}$ which is functorial in $X$.
\end{lem}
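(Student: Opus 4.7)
The plan is to reduce the general case to the case $i = 0$ already handled in Lemma \ref{L:1} by means of the periodicity element $\beta_{8} \in \BO^{8,4}(\pt)$ of Definition \ref{D:periodicity}, which is invertible with inverse $\beta_{8}^{-1} \in \BO^{-8,-4}(\pt)$.

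For every integer $i$, cup product with (the pullback to $X$ of) $\beta_{8}^{i}$ gives an isomorphism
$$
{\cup}\, \beta_{8}^{i} \colon \BO^{4,2}(X) \xra{\cong} \BO^{8i+4,4i+2}(X),
$$
with inverse ${\cup}\, \beta_{8}^{-i}$. Because $\beta_{8}^{i}$ already lies in $\BO^{4*,2*}(\pt)$, which appears as a tensor factor in the definition of $\bar{\thy{U}}^{*,*}(X)$, multiplication by $1_{\thy{U}} \otimes \beta_{8}^{i}$ analogously gives an isomorphism $\bar{\thy{U}}^{4,2}(X) \xra{\cong} \bar{\thy{U}}^{8i+4,4i+2}(X)$. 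Moreover the map $\bar\varphi_{X}$ intertwines these two multiplications, since $\bar\varphi_{X}(u \otimes b) = \varphi(u) \cdot b$ shows directly that $\bar\varphi_{X}\bigl( (u \otimes b) \cdot (1_{\thy{U}} \otimes \beta_{8}^{i}) \bigr) = \bar\varphi_{X}(u \otimes b) \cup \beta_{8}^{i}$.

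I would then define the section at bidegree $(8i+4, 4i+2)$ by the formula
$$
t_{X}(\alpha) \;=\; s_{X}\bigl( \alpha \cup \beta_{8}^{-i} \bigr) \cdot \bigl( 1_{\thy{U}} \otimes \beta_{8}^{i} \bigr),
$$
where $s_{X}$ is the section constructed in Lemma \ref{L:1}. The verification that $\bar\varphi_{X} \circ t_{X} = \id$ is a two-line calculation: using the previous paragraph together with $\bar\varphi_{X} \circ s_{X} = \id$, one finds $\bar\varphi_{X}(t_{X}(\alpha)) = (\alpha \cup \beta_{8}^{-i}) \cup \beta_{8}^{i} = \alpha$. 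Functoriality of $t_{X}$ in $X$ is inherited from that of $s_{X}$, since $\beta_{8}^{\pm i}$ is pulled back from $\pt$ and cup product with such a class commutes with arbitrary pullbacks.

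There is no real obstacle here: once Lemma \ref{L:1} supplies the natural section in bidegree $(4,2)$ and Definition \ref{D:periodicity} provides an invertible class in bidegree $(8,4)$, Lemma \ref{L:2} reduces to a purely formal twisting argument.
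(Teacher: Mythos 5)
Your proposal is correct and takes essentially the same approach as the paper: both use the periodicity element $\beta_{8}$ to twist the natural section $s_{X}$ from Lemma~\ref{L:1} into a section $t_{X}$ in bidegree $(8i+4,4i+2)$. The paper's formula $t_{X}(a \cup \beta_{8}^{i}) = s_{X}(a) \cup (1_{\thy{U}} \otimes \beta_{8}^{i})$ is exactly yours after the substitution $a = \alpha \cup \beta_{8}^{-i}$.
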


\begin{proof}
We have
\(
\BO^{8* + 4, 4* +2}=\BO^{4,2}[\beta_8,\beta^{-1}_8]
\)
for the periodicity element
$\beta_8 \in \BO^{8,4}(\pt)$ of Definition \ref{D:periodicity}.
So any element of
$\BO^{8* + 4, 4* +2}(X)$
may be written uniquely in the form  $a \cup \beta^i_8$ with
$a \in \BO^{4,2}(X)$ and $i \in \ZZ$.  We define
\[
t_{X}(a \cup \beta^i_8)= s_{X}(a)  \cup (1_{\thy{U}}\otimes \beta^i_8 )
\in \bar{\thy{U}}^{8* + 4, 4* +2}(X).
\]
Then  $t_{X}$
is a section of
$\bar \varphi_{X}$ which is natural in $X$.
\end{proof}

\begin{lem}
\label{L:3}
If $X$ is a small pointed motivic space and $i$ is an integer,
then for any $\alpha \in \bar{\thy{U}}^{4i,2i}(X)$
there exists an $n \geq 0$
with
$t_{X \wedge T^{\wedge 2n}} \circ\bar\varphi_{X\wedge T^{\wedge 2n}}(\Sigma_{T}^{2n}\alpha) =
\Sigma_{T}^{2n}\alpha$.
\end{lem}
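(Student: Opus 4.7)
The plan is to use smallness of $X$ to represent the suspended element $\Sigma_T^{2n}\alpha$ as a pullback from some space $\thy{U}_{2s}$ with $s \geq N$, where the hypothesis of Theorem \ref{AlmostlyMain} forces $\bar\varphi$ to be an isomorphism in the relevant bidegree, and then conclude by naturality. Set $P := \id - t \circ \bar\varphi$ on $\bar{\thy{U}}^{8k+4,4k+2}({-})$; since $\bar\varphi \circ t = \id$, $P$ is a natural idempotent whose image lies in $\ker \bar\varphi$, so the lemma amounts to showing $P_{X \wedge T^{\wedge 2n}}(\Sigma_T^{2n}\alpha) = 0$ for some $n$ with $i+n$ odd, and by naturality this holds whenever $\Sigma_T^{2n}\alpha$ is a pullback from a space where $\bar\varphi$ is already an isomorphism.

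I would first decompose $\alpha$ as a finite sum $\alpha = \sum_{j=1}^{r} u_j \otimes b_j$. Because the tensor product is taken over $\thy{U}^{4*,2*}(\pt)$ and the total bidegree is $(4i,2i)$, bidegree accounting forces each $u_j$ to lie in $\thy{U}^{4m_j,2m_j}(X)$ and each $b_j$ in $\BO^{4(i-m_j),2(i-m_j)}(\pt)$. By linearity of $P$ it suffices to handle each summand separately and then take $n$ to be a common upper bound of the required values, adjusted so that $i+n$ has the correct parity.

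Next I would invoke smallness. Each $u_j \in \thy{U}^{4m_j,2m_j}(X) = [\Sigma_T^\infty X, \Sigma^{4m_j,2m_j}\thy{U}]_{SH(S)}$ factors, via the canonical maps $u_{2s} \colon \Sigma_T^\infty \thy{U}_{2s}(-2s) \to \thy{U}$ of \eqref{E:canonical.map}, through an unstable representative $\tilde u_j \colon X \wedge T^{\wedge 2(s-m_j)} \to \thy{U}_{2s}$ for all sufficiently large $s$, since smallness makes $[\Sigma_T^\infty X, {-}]_{SH(S)}$ commute with the filtered-colimit presentation of $\thy{U}$ by shifts of its spectrum levels. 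After suspending and using the structure maps of $\thy{U}$, the suspension isomorphism identifies $\Sigma_T^{2n}(u_j \otimes b_j)$ with the pullback along a morphism into $\thy{U}_{2s_j}$ of the class $\iota_{s_j} \otimes b_j \in \bar{\thy{U}}^{*,*}(\thy{U}_{2s_j})$, where $\iota_{s_j} \in \thy{U}^{4s_j,2s_j}(\thy{U}_{2s_j})$ is the universal class associated to $u_{2s_j}$ and $s_j$ grows with $n$.

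Finally, choose $n$ large enough that $s_j \geq N$ for every $j$ and $i+n$ is odd. By the hypothesis of Theorem \ref{AlmostlyMain}, $\bar\varphi_{\thy{U}_{2s_j}}$ is an isomorphism in every $(4k,2k)$ bidegree, hence $P_{\thy{U}_{2s_j}} = 0$; by naturality of $P$, $P(\tilde u_j^*(\iota_{s_j} \otimes b_j)) = 0$ for each $j$, and summing over the finitely many summands yields the claim. The main obstacle is the factorization step: smallness of $X$ is what allows us to lift the stable class $u_j$ to an unstable representative landing in a specific finite level $\thy{U}_{2s}$ of the spectrum, thereby exhibiting $\Sigma_T^{2n}\alpha$ as a pullback from a space where the hypothesis of Theorem \ref{AlmostlyMain} actually applies.
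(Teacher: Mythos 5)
Your proof is correct and follows essentially the same route as the paper: you reduce to a simple tensor $u\otimes b$ (the paper writes ``we may assume $\alpha = a\otimes b$''), use smallness of $X$ together with the colimit description of $\thy{U}^{4d,2d}(X)$ (Voevodsky's Theorem 5.2) to factor the suspended class through a finite spectrum level $\thy{U}_{2s}$ with $s \geq N$, adjust parity so that $i+n$ is odd, and conclude by naturality from the hypothesis that $\bar\varphi_{\thy{U}_{2s}}$ is an isomorphism (so $t$ is its two-sided inverse there). The idempotent $P=\id - t\circ\bar\varphi$ and the explicit finite-sum/common-upper-bound argument are cosmetic refinements, not a different method.
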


\begin{proof}
We may assume that
$\alpha = a \otimes b$ with $a \in \thy{U}^{4d,2d}(X)$ and
$b \in \BO^{4i-4d,2i-2d}(\pt)$.
For a
small motivic space $X$
there is a canonical isomorphism \cite[Theorem 5.2]{Voevodsky:1998kx}
\begin{equation*}
\thy{U}^{4d,2d}(X)= \colim_m
Hom_{H_{\bullet}(S)}(X \wedge T^{\wedge m},
\thy U_{2d+m}).
\end{equation*}
%
This isomorphism implies that there exists an integer
$n \geq 0$ such that
$\Sigma^{2n}_{T}a= f^*[u_{2d+2n}]$
for an appropriate map
$f\colon X \wedge T^{\wedge 2n} \to \thy{U}_{2d+2n}$
in
$H_\bullet(S)$.  We may assume that $d+n \geq N$ and that $n+i$ is odd.

We have
$[u_{2d+2n}] \otimes b \in \bar{\thy{U}}^{4n+4i,2n+2i}(\thy{U}_{2d+2n})$.
By hypothesis
\[
\bar\varphi_{\thy{U}_{2d+2n}} \colon \bar{\thy{U}}^{4n+4i,2n+2i}(\thy{U}_{2d+2n}) \to
\BO^ {4n+4i,2n+2i} (\thy{U}_{2d+2n})
\]
is an isomorphism.  So its section $t_ {\thy{U}_{2d+2n}} $ is the inverse isomorphism.
Hence we have
\begin{equation*}
(t_ {\thy{U}_ {2d+2n}} \circ \bar\varphi_ {\thy{U}_ {2d+2n}})([u_ {2d+2n}] \otimes b)
= [u_ {2d+2n}] \otimes b.
\end{equation*}
Then by the functoriality of $\bar{\thy{U}}$, $t$ and $\bar\varphi$ we have
\[
\Sigma^{2n}_{T} \alpha
= f^{*}([u_{2d+2n}] \otimes b)
= f^{*} \circ t_{\thy{U}_{2d+2n}} \circ \bar\varphi_ {\thy{U}_{2d+2n}}
([u_{2d+2n}] \otimes b)
=
t_{X \wedge T^{\wedge 2n}} \circ\bar\varphi_{X\wedge T^{\wedge 2n}}(\Sigma_{T}^{2n}\alpha).
\qedhere
\]
\end{proof}

\begin{lem}
\label{L:4}
Suppose for some $(p,q)$ that the homomorphism
$\bar \varphi_{X} \colon \bar{\thy{U}}^{p,q}(X) \to \BO^{p,q}(X)$
is an isomorphism for all small pointed motivic spaces $X$.
Then the same holds for $(p-1,q)$ and $(p-1,q-1)$.
\end{lem}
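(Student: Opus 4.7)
The plan is to reduce both cases to the hypothesis at bidegree $(p,q)$ by smashing $X$ with one of the two motivic circles, exploiting the standard suspension isomorphisms
\[
A^{p,q}(S^1 \wedge X) \cong A^{p-1,q}(X), \qquad A^{p,q}(\GG_m \wedge X) \cong A^{p-1,q-1}(X)
\]
valid for any $T$-spectrum $A$ and any pointed motivic space $X$. First I would check that when $X$ is small, so are $S^1 \wedge X$ and $\GG_m \wedge X$, since smashing with an invertible object of $SH(S)$ commutes with arbitrary coproducts and hence preserves smallness of the associated representable functor $Hom_{SH(S)}(\Sigma^\infty_T(-),-)$.

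Next I would observe that the suspension isomorphisms displayed above are natural in the ring $T$-spectrum $A$ and respect the $A^{*,*}(\pt)$-module structure. Consequently they are compatible with the ring morphism $\varphi\colon \thy{U} \to \BO$ and with tensoring over $\thy{U}^{4*,2*}(\pt)$, yielding natural isomorphisms $\bar{\thy{U}}^{p,q}(S^{1}\wedge X) \cong \bar{\thy{U}}^{p-1,q}(X)$ and $\bar{\thy{U}}^{p,q}(\GG_m\wedge X) \cong \bar{\thy{U}}^{p-1,q-1}(X)$, together with analogous isomorphisms for $\BO$, fitting into commutative squares with $\bar\varphi$ as vertical arrows.

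Applying the hypothesis to the small spaces $Y = S^{1}\wedge X$ and $Y = \GG_m\wedge X$, the map $\bar\varphi_Y$ in bidegree $(p,q)$ is an isomorphism; the commutativity of the two squares then forces $\bar\varphi_X$ to be an isomorphism in bidegrees $(p-1,q)$ and $(p-1,q-1)$ respectively. There is no serious obstacle here; the only point worth checking carefully is that the suspension isomorphism preserves the $\thy{U}^{4*,2*}(\pt)$-module structure so that it descends to the tensor products defining $\bar{\thy{U}}^{*,*}$, and this is a formal consequence of the naturality of the suspension isomorphism in the coefficient spectrum together with functoriality of the smash product.
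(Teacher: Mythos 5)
Your proof is correct and takes essentially the same approach as the paper: reduce to the hypothesis by smashing with $S^1$ (for $(p-1,q)$) and with $\GG_m$ (for $(p-1,q-1)$), and use compatibility of the suspension isomorphisms with $\varphi$ and $\bar\varphi$. The two details you add — that smashing with an invertible object preserves smallness, and that the suspension isomorphism respects the $\thy{U}^{4*,2*}(\pt)$-module structure so it descends through the tensor product — are left implicit in the paper's terse proof, but they are exactly the right points to check.
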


\begin{proof}
For $(p-1,q)$ this is because the suspension $\Sigma_{S^{1}}$ induces isomorphisms
$\thy{U}^{p-1,q}(X) \cong \thy{U}^{p,q}(X \wedge S^{1})$ and similar isomorphisms for $\bar{\thy{U}}$
and $\BO$, and these are compatible with $\varphi$ and $\bar\varphi$.
For $(p-1,q-1)$ use the suspension $\Sigma_{\GG_{m}}$.
\end{proof}

\begin{proof}
[Proof of Theorem \ref{AlmostlyMain}]
First suppose $(p,q) = (8i+4,4i+2)$ for some $i$.  Then for any small motivic space $X$ the map
$\varphi_{X} \colon \bar{\thy{U}}^{8i+4,4i+2}(X) \to \BO^ {8i+4,4i+2}(X)$ is surjective because it
has the section $t_{X}$ of Lemma \ref{L:2}.  To show it injective, we suppose $\alpha$ is in its kernel.
The suspension $\Sigma_{T}$ is compatible with $\varphi$ and $\bar\varphi$, so we have
$\bar\varphi_{X\wedge T^{\wedge 2n}}(\Sigma_{T}^{2n}\alpha)
= \Sigma_{T}^{2n}\varphi_{X}(\alpha) = 0$.
By Lemma \ref{L:3} we therefore also have $\Sigma_{T}^{2n}\alpha = 0$.
But $\Sigma_{T}^{2n}$ induces an isomorphism of cohomology groups.
So we have $\alpha = 0$.  Thus
$\bar \varphi_{X} \colon \bar{\thy{U}}^{p,q}(X) \to \BO^{p,q}(X)$ is an isomorphism
for all small motivic spaces $X$
for $(p,q) = (8i+4,4i+2)$.

The result for other values of $(p,q)$ follows from Lemma \ref{L:4} and a numerical argument.
\end{proof}

\section{Last details}

\begin{proof}
[Proof of Theorem \ref{T:main.CF}]

By the
universality of the symplectically oriented commutative
ring $T$-spectrum $(\MSp,\thom^{\MSp})$
(Theorem \ref{T:main.Sp})
there is a unique morphism $\varphi \colon \MSp \to \BO$ of commutative ring $T$-spectra
with $\varphi(\thom^{\MSp}) = \thom^{\BO}$.  It induces the morphisms
of \eqref {E:CF.hom}:
\begin{gather*}
\bar \varphi_{X}\colon \MSp^{\ast,\ast}(X) \otimes_{\MSp^{4*,2*}(\pt)}
\BO^{4*,2*}(\pt) \to
\BO^{\ast,\ast}(X),
\\
\bar \varphi_{X}\colon \MSp^{4*,2*}(X) \otimes_{\MSp^{4*,2*}(\pt)}
\BO^{4*,2*}(\pt) \to
\BO^{4*,2*}(X).
\end{gather*}
The second morphism, with the bidegrees $(4i,2i)$ only, is an isomorphism for $X = \MSp_{2r}$ for all $r$
by Theorem \ref{T:cellularity}.  So all the hypotheses of Theorem \ref{AlmostlyMain} hold
with $(\thy{U},\thy{u}) = (\MSp,\thom^{\MSp})$.
The conclusions of Theorem \ref{AlmostlyMain} imply Theorem \ref{T:main.CF}.
\end{proof}


\begin{thebibliography}{10}

\bibitem{Cazanave:2010aa}
{\sc C.~Cazanave}, {\em Algebraic homotopy classes of rational functions}.
\newblock Preprint, 2010.

\bibitem{Conner:1966uk}
{\sc P.~E. Conner and E.~E. Floyd}, {\em The relation of cobordism to
  {$K$}-theories}, Lecture Notes in Mathematics, No. 28, Springer-Verlag,
  Berlin, 1966.

\bibitem{Morel:2006aa}
{\sc F.~Morel}, {\em {$\mathbb{A}^1$}-{A}lgebraic topology over a field}.
\newblock Notes, 2006.

\bibitem{Nenashev:2007rm}
{\sc A.~Nenashev}, {\em Gysin maps in {B}almer-{W}itt theory}, J. Pure Appl.
  Algebra, 211 (2007), pp.~203--221.

\bibitem{Panin:2009fp}
{\sc I.~Panin, K.~Pimenov, and O.~R{\"o}ndigs}, {\em On the relation of
  {V}oevodsky's algebraic cobordism to {Q}uillen's {$K$}-theory}, Invent.
  Math., 175 (2009), pp.~435--451.

\bibitem{Panin:2010ab}
{\sc I.~Panin and C.~Walter}, {\em On the algebraic cobordism spectra
  {$\mathbf{MSL}$} and {$\mathbf{MSp}$}}.
\newblock arXiv.org, 2018.

\bibitem{Panin:2010aa}
\leavevmode\vrule height 2pt depth -1.6pt width 23pt, {\em On the motivic
  commutative ring spectrum {$\mathbf{BO}$}}.
\newblock arXiv.org, 2018.

\bibitem{Panin:2010fk}
\leavevmode\vrule height 2pt depth -1.6pt width 23pt, {\em {Q}uaternionic
  {G}rassmannians and {B}orel classes in algebraic geometry}.
\newblock arXiv.org, 2018.

\bibitem{Schlichting:2006aa}
{\sc M.~Schlichting}, {\em Hermitian {$K$}-theory, derived equivalences and
  {K}aroubi's fundamental theorem.}
\newblock Draft, 2006.

\bibitem{Schlichting:2010vn}
\leavevmode\vrule height 2pt depth -1.6pt width 23pt, {\em Hermitian
  {$K$}-theory of exact categories}, J. K-Theory, 5 (2010), pp.~105--165.

\bibitem{Schlichting:2010uq}
\leavevmode\vrule height 2pt depth -1.6pt width 23pt, {\em The
  {M}ayer-{V}ietoris principle for {G}rothendieck-{W}itt groups of schemes},
  Invent. Math., 179 (2010), pp.~349--433.

\bibitem{Voevodsky:1998kx}
{\sc V.~Voevodsky}, {\em {$\mathbf{A}\sp 1$}-homotopy theory}, Doc. Math.,
  Extra Vol. I (1998), pp.~579--604 (electronic).

\end{thebibliography}

\end{document}